\documentclass[sn-mathphys,Numbered]{sn-jnl}
\usepackage{graphicx}
\usepackage{multirow}
\usepackage{amsmath,amssymb,amsfonts}
\usepackage{amsthm}
\usepackage{mathrsfs}
\usepackage[title]{appendix}
\usepackage{xcolor}
\usepackage{textcomp}
\usepackage{manyfoot}
\usepackage{booktabs}
\usepackage{algorithm}
\usepackage{algorithmicx}
\usepackage{algpseudocode}
\usepackage{listings}
\usepackage{hyperref}
\usepackage{natbib}
\theoremstyle{thmstyleone}
\newtheorem{theorem}{Theorem}
\newtheorem{proposition}[theorem]{Proposition}
\theoremstyle{thmstyletwo}
\newtheorem{example}{Example}
\newtheorem{remark}{Remark}
\newtheorem{corollary}[theorem]{Corollary}
\theoremstyle{thmstylethree}
\newtheorem{definition}{Definition}
\newtheorem{conjecture}[theorem]{Conjecture}
\newtheorem{lemma}[theorem]{Lemma}
\begin{document}

\title[A Note on Quaternions over Commutative Rings ]{A Note on Quaternions over Commutative Rings }

\author*[1]{\fnm{A.} \sur{Telveenus}}\email{telveenusa@gmail.com}

\affil*[1]{\orgdiv{Former Lecturer, International Study Centre}, \orgname{Kingston University}, \orgaddress{\street{Kingston Hill Campus}, \city{London}, \postcode{KT2 7LB},  \country{UK}}  \orgdiv{and Former HoD Mathematics, Fatima Mata National College}, \orgname{University of Kerala}, \orgaddress{\street{Kollam}, \city{Kerala},  \country{India}} }

\abstract{The ring of quaternions is defined over the field of real numbers. This article extends that framework by defining quaternions over a class of commutative unital rings  and generalising several of their properties. It also studies the properties of the unit group of quaternions over commutative rings with special attention to halidon rings. }

\keywords{Quaternions, Norm, Conjugate, Primitive $m^{th}$ roots of unity,  halidon rings, unit group.}

\pacs[MSC Classification]{16S34, 20C05, 15B33}  

\maketitle
\section{Introduction}
In 1843, the Irish mathematician and physicist William Rowan Hamilton introduced the concept of quaternions as an extension of complex numbers, where multiplication is commutative, whereas the multiplication of quaternions is non-commutative.   In his language, the sum of a scalar and a vector is a quaternion(\cite{h}, Book I, Chapter I,page 11).
Let $\mathbb{R}$ be the field of real numbers and $\mathbb{H}=\{ q=w+xi+yj+zk \ | w,x,y,z\in \mathbb{R}\}$. The addition is defined as component wise and the multiplication in $\mathbb{H}$ is defined by the following rules:\begin{enumerate}
                                 \item  $i^{2}=j^{2}=k^{2}=-1;$
                                 \item $ ij=k, \ jk=i, \ ki=j \ ;$
                                 \item $ ji=-k, \ kj=-i, \ ik=-j.  $ (\cite{h}, Book II, Chapter I,page 158)
                               \end{enumerate}
Clearly $\mathbb{H}$ is division ring over $\mathbb{R}$. $w$ and $xi+yj+zk$ are called the \textit{real} and \textit{imaginary} parts of $q$ or \textit{scalar quaternion} and \textit{vector quaternion} respectively. The rule 2 can be interpreted as $i \rightarrow j \rightarrow k \rightarrow i$ in a positive orientation(anticlockwise orientation). When the orientation is opposite, we get negative results as in rule 3. According to the above rules, the multiplication (Hamilton) of two elements $q_{1}=w_{1}+x_{1}i+y_{1}j+z_{1}k$ and $q_{2}=w_{2}+x_{2}i+y_{2}j+z_{2}k$ is given by 
$q_{1}q_{2}=(w_{1}w_{2}-x_{}x_{2}-y_{1}y_{2}-z_{1}z_{2})+(w_{1}x_{2}+w_{2}x_{1}+y_{1}z_{2}-z_{1}y_{2})i+(w_{1}y_{2}+w_{2}y_{1}-x_{1}z_{2}+z_{1}x_{2})j+(w_{1}z_{2}+z_{1}w_{2}+x_{1}y_{2}-x_{2}y_{1})k$
$q^{*}=w-xi-yj-zk$ is defined as the conjugate of $q=w+xi+yj+zk$ and the norm of $q$,$\mid\mid q \mid\mid$, is defined by the equation $\mid\mid q \mid\mid^{2}=qq^{*}=w^{2}+x^{2}+y^{2}+z^{2}$. If 
$\mid\mid q \mid\mid \neq 0$, then $q^{-1}=\dfrac{1}{w^{2}+x^{2}+y^{2}+z^{2}}(w-xi-yj-zk)$. If $q=w+xi+yj+zk=w+u$ with $\mid\mid u \mid\mid=1$, then $u^{2}=-1$ and thereby $u^{4}=1$. 
\section{Quaternions over Commutative Rings}
Let $\mathbf{R}$ be a commutative unital ring and let $U(\mathbf{R})$ be the unit group of $\mathbf{R}$. Let $NU(\mathbf{R})$ be the non-unit elements of $\mathbf{R}$. 
We define the ring of quaternions over $\mathbf{R}$ as $\mathcal{H_{\mathbf{R}}}=\{ q=w+xi+yj+zk \ | w,x,y,z\in \mathbf{R}\}$ and the rules of addition and multiplication remain the same as in the standard ring of quaternions. There are no changes in the definitions of conjugate and norm. 
\begin{proposition}
An element $q$ in $\mathcal{H_{\mathbf{R}}}$ is invertible in $\mathcal{H_{\mathbf{R}}}$ if and only if $\mid\mid q \mid\mid^{2} \in U(\mathbf{R})$. 
\end{proposition}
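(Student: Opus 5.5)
The plan is to use the conjugate $q^{*}$ as an explicit near-inverse and the multiplicativity of the norm to handle the converse. I will write $N(q)=\mid\mid q \mid\mid^{2}=w^{2}+x^{2}+y^{2}+z^{2}\in\mathbf{R}$.

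First I would verify two identities that hold over any commutative unital ring, since the Hamilton product formula uses only the ring axioms and commutativity of the coefficients:
\[
qq^{*}=q^{*}q=N(q)
\qquad\text{and}\qquad
(q_{1}q_{2})^{*}=q_{2}^{*}q_{1}^{*}.
\]
The first follows by substituting $q_{2}=q^{*}$ into the product formula. The $i,j,k$ components cancel pairwise because $\mathbf{R}$ is commutative; for example $w(-x)+wx+y(-z)-z(-y)=0$. The scalar part is $w^{2}+x^{2}+y^{2}+z^{2}$, and the computation of $q^{*}q$ is symmetric. The second identity is checked on the basis elements $1,i,j,k$ and extended bilinearly.

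From these I would deduce that the norm is multiplicative. Since $N(q_{2})$ is a scalar, it is central in $\mathcal{H_{\mathbf{R}}}$, so
\[
N(q_{1}q_{2})=q_{1}q_{2}q_{2}^{*}q_{1}^{*}=q_{1}N(q_{2})q_{1}^{*}=N(q_{1})N(q_{2}).
\]

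\textbf{Sufficiency.} If $N(q)\in U(\mathbf{R})$, put $q^{-1}=N(q)^{-1}q^{*}$. Because $N(q)^{-1}\in\mathbf{R}$ is central, this gives $qq^{-1}=N(q)^{-1}qq^{*}=1$, and likewise $q^{-1}q=1$.

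\textbf{Necessity.} If $qp=1$ for some $p\in\mathcal{H_{\mathbf{R}}}$, then multiplicativity gives $N(q)N(p)=N(1)=1$ in $\mathbf{R}$. Hence $N(q)\in U(\mathbf{R})$. This needs only a one-sided inverse, so the argument also shows that one-sided invertibility implies two-sided invertibility.

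The main obstacle is establishing multiplicativity of $N$ without appealing to the real case, that is, the identity $(q_{1}q_{2})^{*}=q_{2}^{*}q_{1}^{*}$. I would prove it by checking $(ab)^{*}=b^{*}a^{*}$ for $a,b\in\{1,i,j,k\}$ using rules 1--3; for instance $(ij)^{*}=-k=ji=(-j)(-i)$. Both sides are $\mathbf{R}$-bilinear in $(q_{1},q_{2})$ because scalars commute with everything, so the identity extends to all of $\mathcal{H_{\mathbf{R}}}$.
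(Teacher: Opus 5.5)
Your proof is correct. Its sufficiency half is the paper's argument: both exhibit the inverse $\mid\mid q\mid\mid^{-2}q^{*}$ from $qq^{*}=\mid\mid q\mid\mid^{2}$ and the centrality of scalars.

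Where you differ is the ``only if'' direction, which the paper does not really argue. Its one line asserts that $\mid\mid q\mid\mid^{-2}q^{*}$ exists ``if and only if'' $\mid\mid q\mid\mid^{2}\in U(\mathbf{R})$. But that formula presupposes the unit and says nothing about an arbitrary inverse $p$ of $q$.

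You supply the missing step with an extra lemma: the anti-automorphism identity $(q_{1}q_{2})^{*}=q_{2}^{*}q_{1}^{*}$ and the resulting multiplicativity of the norm, so that $qp=1$ forces $N(q)N(p)=1$ in $\mathbf{R}$. This costs a short check on basis elements, and it relies on associativity of $\mathcal{H}_{\mathbf{R}}$, which the paper also takes for granted. In return it gives a complete proof over any commutative unital ring. It also shows that one-sided inverses are two-sided, and that the norm is a group homomorphism $U(\mathcal{H}_{\mathbf{R}})\to U(\mathbf{R})$.

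For the record, necessity can also be had more cheaply and closer to the paper's line, without the anti-automorphism check. From $pq=1$ you get $q^{*}=p\,qq^{*}=\mid\mid q\mid\mid^{2}p$. Conjugating gives $q=\mid\mid q\mid\mid^{2}p^{*}$, and then $1=pq=\mid\mid q\mid\mid^{2}\,pp^{*}=\mid\mid q\mid\mid^{2}\mid\mid p\mid\mid^{2}$.
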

\begin{proof}
$\mid\mid q \mid\mid^{2}=qq^{*}$ implies $q^{-1}=\mid\mid q \mid\mid^{-2}q^{*}$ and its existence is possible if and only if $\mid\mid q \mid\mid^{2} \in U(\mathbf{R})$ and hence the proof. 
\end{proof}
\begin{theorem}
Let $\mathcal{H_{\mathbf{R}}}=\{ q=w+xi+yj+zk \ | w,x,y,z\in \mathbf{R}\}$ be the ring of quaternions over $\mathbf{R}$. Then there exists an induced ring of quaternions $\mathcal{H_{\mathbf{R}}^{\perp}}$ with orientation just opposite to $\mathcal{H_{\mathbf{R}}}$ such that $\mathcal{H}_{\mathbf{R}}\cong\mathcal{H_{\mathbf{R}}^{\perp}}$. 
\end{theorem}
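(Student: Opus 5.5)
We first construct $\mathcal{H_{\mathbf{R}}^{\perp}}$ explicitly and then write down an isomorphism. Let $\mathcal{H_{\mathbf{R}}^{\perp}}$ be the free $\mathbf{R}$-module with basis $1,i,j,k$ and componentwise addition. Its multiplication is $\mathbf{R}$-bilinear, with the orientation of rules 2 and 3 reversed:
\[
i^{2}=j^{2}=k^{2}=-1,\qquad ji=k,\ kj=i,\ ik=j,\qquad ij=-k,\ jk=-i,\ ki=-j .
\]
Equivalently, $\mathcal{H_{\mathbf{R}}^{\perp}}$ is the opposite ring $(\mathcal{H_{\mathbf{R}}})^{\mathrm{op}}$ with product $p\circ q := qp$, which is where the word ``induced'' comes from. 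This description shows at once that $\mathcal{H_{\mathbf{R}}^{\perp}}$ is an associative unital ring with centre containing $\mathbf{R}$, since the opposite of a ring is a ring. It also avoids rechecking the associativity of the new table by hand. On basis elements, $i\circ j = ji = -k$, $j\circ i = ij = k$, and so on, which is exactly the reversed table. The formula for $q_1\circ q_2$ is Hamilton's formula with the signs of the cross-product terms flipped.

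For the isomorphism we use conjugation $\phi:\mathcal{H_{\mathbf{R}}}\to\mathcal{H_{\mathbf{R}}^{\perp}}$, $\phi(q)=q^{*}$, regarded as a map of the underlying sets.

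\begin{itemize}
\item \emph{Additivity and unit.} $\phi$ is additive, $\mathbf{R}$-linear and bijective, since it is an involution. It fixes $1$.
\item \emph{Multiplicativity.} It suffices to show $(q_1q_2)^{*}=q_2^{*}q_1^{*}$ in $\mathcal{H_{\mathbf{R}}}$. Then $\phi(q_1q_2)=q_2^{*}q_1^{*}=\phi(q_1)\circ\phi(q_2)$.
\item \emph{Anti-automorphism identity.} By bilinearity over the commutative ring $\mathbf{R}$, it is enough to check $(q_1q_2)^{*}=q_2^{*}q_1^{*}$ on pairs of basis elements. For example, $(ij)^{*}=k^{*}=-k$, while $j^{*}i^{*}=(-j)(-i)=ji=-k$. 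Similarly $(i^{2})^{*}=-1=(-i)(-i)$, and the pairs involving $1$ are trivial.
\end{itemize}

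Alternatively, one can observe that in Hamilton's product formula conjugation negates the vector part, and that the vector part of $q_2^{*}q_1^{*}$ has the same ``dot'' terms but the cross terms $y_1z_2-z_1y_2$ etc.\ with opposite signs. Commutativity of $\mathbf{R}$ is used here to swap factors such as $w_1x_2=x_2w_1$.

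A second, even simpler isomorphism is available and we record it as a check. The $\mathbf{R}$-linear map $i\mapsto -i$, $j\mapsto -j$, $k\mapsto -k$, viewed into the reversed table, is the same as $\phi$. The map swapping $i\leftrightarrow j$ and $k\mapsto -k$ also sends the relation $ij=k$ to $ji=-k$, consistent with $\mathcal{H_{\mathbf{R}}^{\perp}}$. Either works; we use $\phi$.

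The main obstacle is only a matter of precision: making the informal phrase ``opposite orientation'' into a definite multiplication, and confirming that it defines an associative ring. Identifying it with the opposite ring settles associativity for free. After that, the basis check of the anti-automorphism property of conjugation is routine, and it is the only place where commutativity of $\mathbf{R}$ is needed.
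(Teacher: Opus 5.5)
Your proof is correct and is essentially the paper's argument in a more structural form. The paper's $\Gamma_1,\Gamma_2,\Gamma_3$ are exactly the matrices of right multiplication by $i,j,k$ in the basis $1,i,j,k$, so its $\mathcal{H}_{\mathbf{R}}^{\perp}$ is a concrete matrix model of your $(\mathcal{H}_{\mathbf{R}})^{\mathrm{op}}$. Its map $f(q)=wI-x\Gamma_1-y\Gamma_2-z\Gamma_3$ is right multiplication by $q^{*}$, i.e.\ your $\phi$. The only difference is in the verification. The paper expands $f(q_1)f(q_2)$ coefficient by coefficient, and there are sign/index slips in its listed $\Gamma$-relations and in the $\Gamma_3$-term. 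Your reduction to $(q_1q_2)^{*}=q_2^{*}q_1^{*}$ on basis pairs is cleaner and avoids that bookkeeping.

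One aside in your proposal is wrong and should be removed or corrected. The map $i\leftrightarrow j$, $k\mapsto -k$ is \emph{not} an isomorphism onto $\mathcal{H}_{\mathbf{R}}^{\perp}$. In your reversed table $j\circ i=k$, but applying the map to $ij=k$ requires $j\circ i=-k$. So multiplicativity fails unless $2=0$ in $\mathbf{R}$. That map is orientation-preserving, and it is in fact an automorphism of $\mathcal{H}_{\mathbf{R}}$ itself, where $ji=-k$. An orientation-reversing choice such as $i\leftrightarrow j$, $k\mapsto k$ is the correct alternative.
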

\begin{proof}
We define $\mathcal{H_{\mathbf{R}}^{\perp}}=\{ Q=wI+x\Gamma_{1}+y\Gamma_{2}+z\Gamma_{3} \ | w,x,y,z\in \mathbf{R}\}$, where \\
$I=\left(
\begin{array}{cccc}
1 & 0 & 0 & 0 \\
0 & 1 & 0 & 0 \\
0 & 0 & 1 & 0 \\
0 & 0 & 0 & 1 \\
\end{array}
\right)
$,
$\Gamma_{1}=\left(
\begin{array}{cccc}
0 & -1 & 0 & 0 \\
1 & 0 & 0 & 0 \\
0 & 0 & 0 & 1 \\
0 & 0 & -1 & 0 \\
\end{array}
\right)
$,
$\Gamma_{2}=\left(
\begin{array}{cccc}
0 & 0 & -1 & 0 \\
0 & 0 & 0 & -1 \\
1 & 0 & 0 & 0 \\
0 & 1 & 0 & 0 \\
\end{array}
\right)
$,
$\Gamma_{3}=\left(
\begin{array}{cccc}
0 & 0 & 0 & -1 \\
0 & 0 & 1 & 0 \\
0 & -1 & 0 & 0 \\
1 & 0 & 0 & 0 \\
\end{array}
\right)
$.
A bit lengthy calculation shows that $\mathcal{H_{\mathbf{R}}^{\perp}}$ satisfies all rules of addition and multiplication except the interchange of rule 2 and rule 3 of multiplication. 
Let $f:\mathcal{H_{\mathbf{R}}} \mapsto \mathcal{H_{\mathbf{R}}^{\perp}}$ be defined by $f(w+xi+yj+zk)=wI-x\Gamma_{1}-y\Gamma_{2}-z\Gamma_{3}$. This map is clearly a bijective map preserving the operation  addition and $f(1)=I$. Let us show that f preserves the multiplication as well. \\
Let $q_{1}=w_{1}+x_{1}i+y_{1}j+z_{1}k$, $q_{2}=w_{2}+x_{2}i+y_{2}j+z_{2}k$ be any two elements in $\mathcal{H}$. Then $f(q_{1})=w_{1}I-x_{1}\Gamma_{1}-y_{1}\Gamma_{2}-z_{1}\Gamma_{3}$ and $f(q_{2})=w_{2}I-x_{2}\Gamma_{1}-y_{2}\Gamma_{2}-z_{2}\Gamma_{3}$. \\
$q_{1}q_{2}=(w_{1}w_{2}-x_{}x_{2}-y_{1}y_{2}-z_{1}z_{2})+(w_{1}x_{2}+w_{2}x_{1}+y_{1}z_{2}-z_{1}y_{2})i+(w_{1}y_{2}+w_{2}y_{1}-x_{1}z_{2}+z_{1}x_{2})j
+(w_{1}z_{2}+z_{1}w_{2}+x_{1}y_{2}-x_{2}y_{1})k.$ \\
$f(q_{1}q_{2})=(w_{1}w_{2}-x_{}x_{2}-y_{1}y_{2}-z_{1}z_{2})I-(w_{1}x_{2}+w_{2}x_{1}+y_{1}z_{2}-z_{1}y_{2})\Gamma_{1}-(w_{1}y_{2}+w_{2}y_{1}-x_{1}z_{2}+z_{1}x_{2})\Gamma_{2}
+(w_{1}z_{2}+z_{1}w_{2}+x_{1}y_{2}-x_{2}y_{1})\Gamma_{3}.$ \\
\noindent
Using the relations $\Gamma_{1}^{2}=\Gamma_{2}^{2}=\Gamma_{3}^{2}=-1,\Gamma_{1}\Gamma_{2}=-\Gamma_{3},\Gamma_{2}\Gamma_{3}=-\Gamma_{1},\Gamma_{3}\Gamma_{1}=-\Gamma_{2},
\Gamma_{2}\Gamma_{1}=\Gamma_{3},\Gamma_{3}\Gamma_{1}=\Gamma_{1},\Gamma_{1}\Gamma_{3}=\Gamma_{2}$, we have 
$f(q_{1})f(q_{2})=(w_{1}I-x_{1}\Gamma_{1}-y_{1}\Gamma_{2}-z_{1}\Gamma_{3})(w_{2}I-x_{2}\Gamma_{1}-y_{2}\Gamma_{2}-z_{2}\Gamma_{3})=(w_{1}w_{2}-x_{1}x_{2}-y_{1}y_{2}-z_{1}z_{2})I
-(w_{1}x_{2}+w_{2}x_{1}+y_{1}z_{2}-z_{1}y_{2})\Gamma_{1}-(w_{1}y_{2}+w_{2}y_{1}-x_{1}z_{2}+z_{1}x_{2})\Gamma_{2}
+(w_{1}z_{2}+z_{1}w_{2}+x_{1}y_{2}-x_{2}y_{1})\Gamma_{3}=f(q_{1}q_{2}).$  \\
$ \therefore \quad \quad \mathcal{H_{\mathbf{R}}}\cong\mathcal{H_{\mathbf{R}}^{\perp}} $
\end{proof}
\begin{theorem}
If $q=w+xi+yj+zk=w+u \in \mathcal{H_{\mathbf{R}}}$ with $\mid\mid u \mid\mid=1$, then $q^{n}=q_{n}+q_{n}^{\prime}u$ where $\left( \begin{array}{c} q_{n} \\
q_{n}^{\prime}\end{array}\right)$=$\left(\begin{array}{cc} a & -1 \\ 1 & a\end{array} \right)^{n-1}$ $\left( \begin{array}{c} a \\
1 \end{array}\right)$ and $n$ is a positive integer.
\end{theorem}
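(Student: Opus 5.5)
The natural reading is that $a$ stands for the scalar part $w$ of $q$. I would prove the formula by induction on $n$, with the whole argument resting on the single identity $u^{2}=-1$.

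\textbf{Step 1: $u^{2}=-1$.} For a pure vector quaternion $u=xi+yj+zk$ over the commutative ring $\mathbf{R}$, Hamilton's product formula with $w_{1}=w_{2}=0$ gives
\[
u^{2}=-(x^{2}+y^{2}+z^{2}) + (yz-zy)i+(zx-xz)j+(xy-yx)k .
\]
The vector components cancel because $\mathbf{R}$ is commutative. Hence $u^{2}=-\|u\|^{2}=-1$, using the hypothesis $\|u\|=1$ (so $\|u\|^{2}=uu^{*}=1$). This is the one place where commutativity of $\mathbf{R}$ is genuinely used. I expect it to be the only step needing any care: one has to note that $u^{*}=-u$, so $\|u\|^{2}=uu^{*}=-u^{2}$.

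\textbf{Step 2: the span of $1$ and $u$ is closed under multiplication.} Since $w\in\mathbf{R}$ is central, the $\mathbf{R}$-span of $1$ and $u$ is a commutative subring isomorphic to $\mathbf{R}[t]/(t^{2}+1)$. Within it,
\[
(c+du)(w+u)=(cw-d)+(c+dw)u .
\]

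\textbf{Step 3: induction on $n$.} Set $q_{1}=w$ and $q_{1}'=1$, which is the case $n=1$ since the matrix power is then the identity. Suppose $q^{n}=q_{n}+q_{n}'u$. Then Step 2 gives
\[
q^{n+1}=q^{n}q=(wq_{n}-q_{n}')+(q_{n}+wq_{n}')u ,
\]
that is,
\[
\left(\begin{array}{c} q_{n+1}\\ q_{n+1}'\end{array}\right)=\left(\begin{array}{cc} w & -1\\ 1 & w\end{array}\right)\left(\begin{array}{c} q_{n}\\ q_{n}'\end{array}\right).
\]
Iterating this recurrence from the base vector $(w,1)^{T}$ yields the $(n-1)$-th power of the matrix applied to $(w,1)^{T}$, which is exactly the stated formula with $a=w$.

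Once Step 1 is in place, Steps 2 and 3 are routine.
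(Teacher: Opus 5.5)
Your proof is correct and follows the paper's argument. Both proceed by induction on $n$ via $q^{n+1}=q^{n}q$, use $u^{2}=-1$ to get $(q_{n}+q_{n}'u)(w+u)=(wq_{n}-q_{n}')+(q_{n}+wq_{n}')u$, and read off the $2\times 2$ recurrence matrix with $a=w$. Your Step 1, deriving $u^{2}=-\|u\|^{2}=-1$ from the commutativity of $\mathbf{R}$, makes explicit a fact the paper simply takes over from its introduction.
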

\begin{proof}
Let us apply the method of mathematical induction to prove this theorem. \\
When $n$=1, $q^{1}=q_{1}+q_{1}^{\prime}u=a+u$ which is true. \\
Suppose the statement is true for n=s. \\ Then $q^{s}=q_{s}+q_{s}^{\prime}u$ where $\left( \begin{array}{c} q_{s} \\
q_{s}^{\prime}\end{array}\right)$=$\left(\begin{array}{cc} a & -1 \\ 1 & a\end{array} \right)^{s-1}$ $\left( \begin{array}{c} a \\
1 \end{array}\right).$\\
$$q^{s+1}=(q_{s}+q_{s}^{\prime})(a+u) \quad \ \ =aq_{s}-q_{s}^{\prime}+(q_{s}+aq_{s}^{\prime})u  \quad \text{using}  \ u^{2}=-1$$
$$=q_{s+1}+q_{s1}^{\prime}$$ \\
where $\left( \begin{array}{c} q_{s+1} \\
q_{s+1}^{\prime}\end{array}\right)$=$\left(\begin{array}{cc} a & -1 \\ 1 & a\end{array} \right)$ $\left( \begin{array}{c} q_{s} \\
q_{s}^{\prime} \end{array}\right)$ = $\left( \begin{array}{c} q_{s} \\
q_{s}^{\prime}\end{array}\right)$=$\left(\begin{array}{cc} a & -1 \\ 1 & a\end{array} \right)^{s}$ $\left( \begin{array}{c} a \\
1 \end{array}\right).$\\
$\therefore \ $  It is true for $n=s+1$ \\
By mathematical induction the statement is true for all positive integer $n$ and hence the theorem. 
\end{proof}Next theorem deals with the computation of $U(\mathcal{H_{\mathbf{R}}})$.
\begin{theorem} \label{t1}
Let $\mathbf{R}$ be a finite commutative unital ring. Then $U(\mathcal{H_{\mathbf{R}}})=\{ q=w+xi+yj+zk \in \mathcal{H_{\mathbf{R}}} \ | q^{s}=1, \ \text{for some positive integer} \ s \}$ and $q^{s}$ can be computed using the following recurrence formula:\\
$$q^{s}=q_{s}+q_{s}^{\prime}i+q_{s}^{\prime \prime}j+q_{s}^{\prime \prime \prime }k $$ where \\
 
 $$ \left( \begin{array}{c}
 q_{s} \\
 q_{s}^{\prime} \\
 q_{s}^{\prime \prime} \\
 q_{s}^{\prime \prime \prime }
 \end{array}\right)
 = \left(\begin{array}{cccc} 
w & -x & -y& -z \\
x & w & z & -y \\
y & -z & w & x \\
z & y & -x & w 
\end{array} \right)
 \left( \begin{array}{c}
 q_{s-1} \\
 q_{s-1}^{\prime} \\
 q_{s-1}^{\prime \prime} \\
 q_{s-1}^{\prime \prime \prime }

  \end{array}\right)$$

 with  
$\left( \begin{array}{c}
 q_{1} \\
 q_{1}^{\prime} \\
 q_{1}^{\prime \prime} \\
 q_{1}^{\prime \prime \prime }

  \end{array}\right)$= $\left( \begin{array}{c}
 w \\
 x\\
 y \\
 z

  \end{array}\right).$
  
\end{theorem}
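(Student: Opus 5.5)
The statement has two parts: a description of the unit group as the torsion elements, and a recurrence for computing powers. I will prove them separately.

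For the first part, I use only that $\mathcal{H_{\mathbf{R}}}$ is finite. Since $\mathbf{R}$ is finite, $\mathcal{H_{\mathbf{R}}}$ is a finite ring with $|\mathbf{R}|^{4}$ elements, so $U(\mathcal{H_{\mathbf{R}}})$ is a finite group.

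For the inclusion $\subseteq$, take $q\in U(\mathcal{H_{\mathbf{R}}})$. Lagrange's theorem gives $q^{s}=1$ with $s=|U(\mathcal{H_{\mathbf{R}}})|$. Alternatively, the powers $q,q^{2},\dots$ must repeat, say $q^{a}=q^{b}$ with $a<b$, and cancelling the unit $q^{a}$ gives $q^{b-a}=1$.

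For $\supseteq$, suppose $q^{s}=1$ with $s\geq 1$. Then $q\cdot q^{s-1}=q^{s-1}\cdot q=1$, so $q$ is a two-sided unit with inverse $q^{s-1}$ (interpreted as $1$ when $s=1$).

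If one wants to connect this with the earlier Proposition, note that $q^{s}=1$ forces $\|q\|^{2}\in U(\mathbf{R})$. The norm is multiplicative, because $(q_1q_2)^{*}=q_2^{*}q_1^{*}$ and $\|q\|^{2}$ is central, so $(\|q\|^{2})^{s}=\|q^{s}\|^{2}=1$. This is not logically needed for the argument.

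For the recurrence, write $q^{s}=q\,q^{s-1}$ and argue by induction on $s$, as in the previous theorem. The base case $s=1$ is the initial vector $(w,x,y,z)^{T}$.

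For the inductive step, assume $q^{s-1}=q_{s-1}+q_{s-1}'i+q_{s-1}''j+q_{s-1}'''k$. Apply the Hamilton product formula from the Introduction with $q_1=q=(w,x,y,z)$ and $q_2=q^{s-1}$.

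\begin{itemize}
\item The scalar part is $wq_{s-1}-xq_{s-1}'-yq_{s-1}''-zq_{s-1}'''$, which is the first row $(w,-x,-y,-z)$ of the matrix.
\item The $i$-part is $wq_{s-1}'+xq_{s-1}+yq_{s-1}'''-zq_{s-1}''$, which is the row $(x,w,z,-y)$.
\item The $j$-part is $wq_{s-1}''+yq_{s-1}-xq_{s-1}'''+zq_{s-1}'$, which is the row $(y,-z,w,x)$.
\item The $k$-part is $wq_{s-1}'''+zq_{s-1}+xq_{s-1}''-yq_{s-1}'$, which is the row $(z,y,-x,w)$.
\end{itemize}

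These four rows are exactly the displayed $4\times 4$ matrix, i.e.\ the matrix of left multiplication by $q$, and this completes the induction.

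The only real care needed is the order of multiplication. The matrix encodes left multiplication by $q$, so the identity must be written as $q^{s}=q\cdot q^{s-1}$ and not $q^{s-1}\cdot q$. Since $q$ commutes with its own powers, both orders give the same element, but only $q\cdot q^{s-1}$ matches the stated matrix. The main work is checking the signs of the $i,j,k$ coefficients against the product formula; apart from that, the argument is routine finiteness.
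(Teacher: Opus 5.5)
Your proof of the description of $U(\mathcal{H_{\mathbf{R}}})$ is correct. It is also more complete than the paper's, which only checks that $q^{s}=1$ gives $q^{-1}=q^{s-1}$ and never argues the reverse inclusion.

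The recurrence part has a concrete error in the one step it rests on. The displayed matrix is the matrix of \emph{right} multiplication by $q$, not left. Your four coefficient expressions are computed correctly for $q\cdot q^{s-1}$. Read off as rows, however, they give $(x,w,-z,y)$, $(y,z,w,-x)$ and $(z,-y,x,w)$. The displayed rows are $(x,w,z,-y)$, $(y,-z,w,x)$ and $(z,y,-x,w)$, so in every row the two cross-product entries have the opposite sign. A quick check: for $q=j$ the displayed matrix sends the coordinate vector of $i$ to that of $k=ij$, whereas $ji=-k$. So the sentence ``these four rows are exactly the displayed matrix'' is false. Your closing remark that only $q\cdot q^{s-1}$ matches the matrix is exactly backwards.

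The repair is what the paper does. Write $q^{s}=q^{s-1}\cdot q$ and apply the product formula with $q_{1}=q^{s-1}$ and $q_{2}=q$. This gives the $i,j,k$ parts $xq_{s-1}+wq_{s-1}'+zq_{s-1}''-yq_{s-1}'''$, $yq_{s-1}-zq_{s-1}'+wq_{s-1}''+xq_{s-1}'''$ and $zq_{s-1}+yq_{s-1}'-xq_{s-1}''+wq_{s-1}'''$, which match the matrix row by row. Alternatively, keep your order and use $q\,q^{s-1}=q^{s-1}q$ to pass to the right-multiplication matrix; you still have to recognise the matrix as right multiplication. With either fix the induction goes through.
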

\begin{proof}
$q^{s}=1 \implies q^{-1}=q^{s-1} \implies q \in U(\mathcal{H_{\mathbf{R}}}).$ \\
$q^{s}=q^{s-1}q=(q_{s-1}+q_{s-1}^{\prime}i+q_{s-1}^{\prime \prime}j+q_{s-1}^{\prime \prime \prime }k)(w+xi+yj+zk) \\
= (wq_{s-1}-xq_{s-1}^{\prime}-yq_{s-1}^{\prime \prime}-zq_{s-1}^{\prime \prime \prime }))+(xq_{s-1}wq_{s-1}^{\prime}+zq_{s-1}^{\prime \prime}-yq_{s-1}^{\prime \prime \prime }))i
+(yq_{s-1}-zq_{s-1}^{\prime}+wq_{s-1}^{\prime \prime}j+xq_{s-1}^{\prime \prime \prime }))j+(zq_{s-1}+yq_{s-1}^{\prime}-xq_{s-1}^{\prime \prime}+wq_{s-1}^{\prime \prime \prime }))k \\ 
=q_{s}+q_{s}^{\prime}i+q_{s}^{\prime \prime}j+q_{s}^{\prime \prime \prime }k $  \\ 
$\therefore  $$ \left( \begin{array}{c}
 q_{s} \\
 q_{s}^{\prime} \\
 q_{s}^{\prime \prime} \\
 q_{s}^{\prime \prime \prime }
 \end{array}\right)
 = \left(\begin{array}{cccc} 
w & -x & -y& -z \\
x & w & z & -y \\
y & -z & w & x \\
z & y & -x & w 
\end{array} \right)
 \left( \begin{array}{c}
 q_{s-1} \\
 q_{s-1}^{\prime} \\
 q_{s-1}^{\prime \prime} \\
 q_{s-1}^{\prime \prime \prime }

  \end{array}\right)$
\end{proof}
\begin{proposition}
Let $\mathbf{R}=\mathbb{Z}_{n}$, the ring of integers modulo $n$. If $n=2^{t}$, then $|U(\mathcal{H_{\mathbf{R}}})|=\phi(|\mathcal{H_{\mathbf{R}}}|)$.
\end{proposition}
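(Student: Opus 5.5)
The plan is to count the units of $\mathcal{H_{\mathbf{R}}}$ directly by means of the first Proposition of this section (an element $q$ is invertible if and only if $\mid\mid q \mid\mid^{2}\in U(\mathbf{R})$), and then compare the count with $\phi(|\mathcal{H_{\mathbf{R}}}|)$. I will assume $t\geq 1$, since for $t=0$ the ring $\mathbb{Z}_{1}$ is trivial. Because $\mathcal{H_{\mathbf{R}}}$ is a free $\mathbf{R}$-module on $1,i,j,k$, we have $|\mathcal{H_{\mathbf{R}}}|=(2^{t})^{4}=2^{4t}$. Hence the target value is $\phi(2^{4t})=2^{4t-1}$, which is exactly half of $|\mathcal{H_{\mathbf{R}}}|$. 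So it suffices to show that exactly half of the quadruples $(w,x,y,z)\in\mathbb{Z}_{2^{t}}^{4}$ give units.

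First I characterise the units of $\mathbf{R}$. In $\mathbb{Z}_{2^{t}}$, a residue is a unit if and only if it is odd, that is, coprime to $2$. Since $2\mid 2^{t}$, the parity of a residue class modulo $2^{t}$ is well defined. By the Proposition, $q=w+xi+yj+zk$ is therefore a unit exactly when $w^{2}+x^{2}+y^{2}+z^{2}$ is odd.

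Next I reduce modulo $2$. Using $a^{2}\equiv a \pmod 2$, we get $w^{2}+x^{2}+y^{2}+z^{2}\equiv w+x+y+z \pmod 2$. So $q\in U(\mathcal{H_{\mathbf{R}}})$ if and only if $w+x+y+z$ is odd.

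Finally I count. The map $(w,x,y,z)\mapsto(w+1,x,y,z)$ is a bijection of $\mathbb{Z}_{2^{t}}^{4}$ onto itself, and because $2^{t}$ is even it flips the parity of $w+x+y+z$. It therefore exchanges the quadruples with odd coordinate sum and those with even coordinate sum, so these two sets have equal size $2^{4t}/2=2^{4t-1}$. This gives $|U(\mathcal{H_{\mathbf{R}}})|=2^{4t-1}=\phi(2^{4t})=\phi(|\mathcal{H_{\mathbf{R}}}|)$.

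There is no real obstacle in this argument. The only points needing care are that parity is well defined modulo $2^{t}$ (which uses that the modulus is even) and that the odd residues are exactly the units of $\mathbb{Z}_{2^{t}}$; both fail for odd $n$, which explains the hypothesis $n=2^{t}$.
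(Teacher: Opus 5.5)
Your proof is correct, and it supplies a step that the paper's proof omits. The paper's proof is a single inference. Because $|U(\mathbb{Z}_{2^{t}})|=|NU(\mathbb{Z}_{2^{t}})|$, it concludes that $|U(\mathcal{H_{\mathbf{R}}})|=|NU(\mathcal{H_{\mathbf{R}}})|$, and then halves $2^{4t}$. The transfer from $\mathbf{R}$ to $\mathcal{H_{\mathbf{R}}}$ is never justified, and it does not follow from the equality of counts alone. For example, take $\mathbf{R}=\mathbb{F}_{3}\times\mathbb{F}_{4}$, which has $6$ units and $6$ non-units. Here $\mathcal{H}_{\mathbb{F}_{3}}$ has $48$ units out of $81$. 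In $\mathcal{H}_{\mathbb{F}_{4}}$ the norm is $(w+x+y+z)^{2}$ because the characteristic is $2$, so it has $192$ units out of $256$. Hence units make up only $4/9$ of $\mathcal{H_{\mathbf{R}}}$.

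Your argument provides the mechanism specific to $\mathbb{Z}_{2^{t}}$. The norm criterion together with $a^{2}\equiv a \pmod 2$ shows that the units are exactly the quadruples with odd coordinate sum. The shift $w\mapsto w+1$ then matches these bijectively with the non-units. The paper's version is shorter but leaves its only substantive step unproved. Yours is complete, and it also describes $U(\mathcal{H}_{\mathbb{Z}_{2^{t}}})$ explicitly as the preimage of $U(\mathcal{H}_{\mathbb{Z}_{2}})$ under reduction modulo $2$.
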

\begin{proof}
In $\mathbb{Z}_{2^{t}}$, $|U(\mathbb{Z}_{2^{t}})|=|NU(\mathbb{Z}_{2^{t}})| \implies |U(\mathcal{H_{\mathbf{R}}})|=|NU(\mathcal{H_{\mathbf{R}}})|$. But 
$|U(\mathcal{H_{\mathbf{R}}})|+|NU(\mathcal{H_{\mathbf{R}}})|=2^{4t}$. This implies $|U(\mathcal{H_{\mathbf{R}}})|=2^{4t-1}=\phi(2^{4t})=\phi(|\mathcal{H_{\mathbf{R}}}|)$
\end{proof}
An element $q$ in $\mathcal{H_{\mathbf{R}}}$ is said to be a \textit{nil potent} element with index $s$ if $s$ is the least positive integer such that $q^{s}=0.$ The set of all nil potent elements in $\mathcal{H_{\mathbf{R}}}$ is denoted by $N(\mathcal{H_{\mathbf{R}}})$. 
\begin{theorem}
For primes $p>3$, $q=w+xi+yj+zk \in N(\mathcal{H_{\mathbf{\mathbb{Z}_{p}}}})$ $\iff$ $w=0$ and $x^{2}+y^{2}+z^{2}=0$. Its index is 2 and $|N(\mathcal{H_{\mathbf{\mathbb{Z}_{p}}}})|=p^{2}.$ 
\end{theorem}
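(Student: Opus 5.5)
The plan is to exploit the quadratic relation satisfied by every quaternion. For $q=w+xi+yj+zk\in\mathcal{H}_{\mathbb{Z}_p}$ we have $q+q^{*}=2w$ and $qq^{*}=\mid\mid q\mid\mid^{2}=w^{2}+x^{2}+y^{2}+z^{2}$, both scalars. Hence
$$q^{2}-2wq+\mid\mid q\mid\mid^{2}=0 .$$
This is just $q(q+q^{*})-qq^{*}$ rewritten, or a direct check from the multiplication formula.

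I also need multiplicativity of the norm, $\mid\mid q_{1}q_{2}\mid\mid^{2}=\mid\mid q_{1}\mid\mid^{2}\mid\mid q_{2}\mid\mid^{2}$. Over any commutative ring this holds because $(q_{1}q_{2})^{*}=q_{2}^{*}q_{1}^{*}$, which is a polynomial identity in the coordinates and so transfers from $\mathbb{H}$, and because $\mid\mid q_{2}\mid\mid^{2}$ is central.

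For the forward direction, suppose $q^{s}=0$ with $q\neq 0$. Taking norms gives $(\mid\mid q\mid\mid^{2})^{s}=0$ in the field $\mathbb{Z}_p$, so $\mid\mid q\mid\mid^{2}=0$. The relation then reduces to $q^{2}=2wq$, and by induction $q^{s}=(2w)^{s-1}q$. Since $q\neq 0$ and $\mathbb{Z}_p$ is a field, $q^{s}=0$ forces $2w=0$. As $p$ is odd, this gives $w=0$, and then $\mid\mid q\mid\mid^{2}=0$ reads $x^{2}+y^{2}+z^{2}=0$.

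Conversely, if $w=0$ and $x^{2}+y^{2}+z^{2}=0$, the relation gives $q^{2}=-\mid\mid q\mid\mid^{2}=0$. So every nonzero such $q$ has index exactly $2$; the zero quaternion is the trivial exception with index $1$, and I would note this.

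The main remaining step, and the only nontrivial one, is counting the solutions $(x,y,z)\in\mathbb{Z}_p^{3}$ of $x^{2}+y^{2}+z^{2}=0$. The form is nondegenerate in three variables over $\mathbb{F}_p$ with $p$ odd.

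First I show it is isotropic. The sets $\{x^{2}\}$ and $\{-1-y^{2}\}$ each have $(p+1)/2$ elements, so they intersect, which gives a nontrivial zero $(x,y,1)$. A nondegenerate isotropic ternary form is equivalent to $uv-t^{2}$, up to a scalar that does not affect the zero set.

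I then count the zeros of $uv=t^{2}$ directly:
\begin{itemize}
\item if $u\neq 0$, each choice of $(u,t)$ determines $v$, giving $(p-1)p$ solutions;
\item if $u=0$, we need $t=0$ with $v$ arbitrary, giving $p$ solutions.
\end{itemize}
The total is $p^{2}$. Equivalently, the projective conic has $p+1$ points, and $(p+1)(p-1)+1=p^{2}$.

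This gives $|N(\mathcal{H}_{\mathbb{Z}_p})|=p^{2}$, counting $0$. The hypothesis $p>3$ is stronger than needed; the argument only uses that $p$ is odd.
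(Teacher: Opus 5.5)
Your proof is correct and complete, but it is organised quite differently from the paper's.

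\textbf{Nilpotency.} The paper works purely in coordinates. It expands $q^{2}$ to show that $q^{2}=0$ forces $w=0$ and $x^{2}+y^{2}+z^{2}=0$. It then expands $q^{3}$ and excludes index $3$ with $w\neq 0$ by invoking the invertibility of $6$, which is where the hypothesis $p>3$ comes from. Exponents $s\geq 4$ are never actually treated; they are only dismissed by the remark that $q^{2}=0$ ``must happen before''.

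Your relation $q^{2}-2wq+\mid\mid q\mid\mid^{2}=0$, combined with multiplicativity of the norm, handles every exponent at once through $q^{s}=(2w)^{s-1}q$. It needs only that $p$ is odd, so it also proves the case $p=3$, which the paper only checks by computer in the following remark. In fact, if the paper's $q^{3}$ equations are carried through correctly, they give $w^{2}=3(x^{2}+y^{2}+z^{2})$ and $3w^{2}=x^{2}+y^{2}+z^{2}$, hence $8w^{2}=0$. So oddness suffices there as well, consistent with your observation.

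\textbf{Counting.} The paper splits into the cases $p\equiv 1$ and $p\equiv 3 \pmod 4$. In the first it uses $\sqrt{-1}$; in the second it relies on an informal claim that each nonzero square is attained $p+1$ times. Your reduction, via isotropy and the hyperbolic-plane splitting, to the cone $uv=t^{2}$ yields $p^{2}$ uniformly with no case distinction.

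The price is reliance on the standard classification of nondegenerate quadratic forms over $\mathbb{F}_{p}$, or equivalently the $p+1$ point count of a smooth conic. The paper's counting is more elementary but, as written, not fully justified.
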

\begin{proof}
We have $q^{2}=(w^{2}-x^{2}-y^{2}-z^{2})+2wxi+2wyj+2wzk.$ Suppose that $q^{2}=0$. Then $w^{2}-x^{2}-y^{2}-z^{2}=0, \ 2wx=0, \ 2wy=0, \ 2wz=0$. If $w \neq 0, $ then $w^{-1}$ exists in $\mathbb{_{p}}$. This $\implies$ $x=y=z=0$ and the first equation gives $w=0$ which is a contradiction. $\therefore$ $w=0 $ and $x^{2}+y^{2}+z^{2}=0$. If $q^{2} \neq 0$ and assume that $q^{3}=0$, then we have the following equations: \\
$$w(w^{2}-3x^{2}-3y^{2}-3z^{2})=0$$ 
$$x(3w^{2}-x^{2}-y^{2}-z^{2})=0$$ 
$$y(3w^{2}-x^{2}-y^{2}-z^{2})=0$$ 
$$z(3w^{2}-x^{2}-y^{2}-z^{2})=0$$ 
Let us assume that $w \neq 0$. This implies $w^{2}=x^{}+y^{}+z^{2}.$ 
For nontrivial nil potent elements, at least one of the $x, \ y \ ,z$  must be non zero. For this let us take $x\neq 0$. This implies $$3w^{2}=x^{}+y^{2}+z^{2} \implies 9w^{2}=3w^{2}$$
$$\implies 6w^{2}=0 \implies w^{2}=0 \implies w=0$$ as $6$ has multiplicative inverse in $\mathbb{Z}_{p}$ and $p>3.$ It is again a contradiction. $\therefore$ $w$ must be zero. When $w=0$, automatically, $x^{2}+y^{2}+z^{2}=0$. This is the required condition for $q^{2}=0$. Even if $q^{3}=0$, before this $q^{2}=0$ must happen. $\therefore $ 2 is the least positive integer such that $q^{2}=0$. So the index is $2$.  \\
It is well known that, for an odd prime $p$, $\sqrt{-1} mod \ p $ exists only when $p=4t+1$ for some integer $t>0$ or the equation $x^{2}+1 \equiv 0 \ mod \ p$ has solutions $\iff$ $p=4t+1$ and the solutions are $\pm (\frac{p-1}{2})!$. \\
Suppose that $p=4t+1$. We have $w=0$ and $x^{2}+y^{2}+z^{2}=0$. If $x=0$, then $y=\pm\sqrt{-1}z$. Thus we have $2(p-1)$ non trivial triplets $(0,y,z)$ as $y$ can take $(p-1)$ values and $z$ can take $2$ values. For $x \neq 0$, we can assign $(p-1)(p-1)$ non trivial triplets $(x,y,z )$ as $z$ takes only one value. \\
$\therefore $ total number of nil potent elements $= (p-1)(p-1)+2(p-1)+1=p^2 $, the last number 1 is the trivial nil potent element. Thus we have $|N(\mathcal{H_{\mathbf{\mathbb{Z}_{p}}}})|=p^{2}.$ When $p\neq 4t+1$, there are $(\dfrac{p-1}{2})$ numbers in $\mathbb{Z}_{p}$ which has square roots and each square will take $2$ values. In the calculation of $z^{2}=-x^{2}-y^{2}$, each square root will appear in $(p+1)$ places (this is not true for $p=4t+1$). Hence the number of nil potent elements = $(\dfrac{p-1}{2}) \times 2 \times (p+1) +1 =p^{2}$. The last one is for the trivial nil potent element. Hence the theorem.
\end{proof}
\begin{remark}
The above theorem is true for $p=3$ as well. See programme $1$ in appendix. 
\end{remark}
This example verifies that the theorem stated above holds for all prime numbers $p > 2$ . The verification is carried out only for $p=3$ and $p=5$ using Programme 1, which is provided in the appendix. Readers can verify the results for any prime greater than $2$, but some RAM restrictions are there.
\begin{example}
\begin{verbatim}
To find the nil potent elements  q=a+bi+cj+dk.
Enter n=
3
q=0+0i + 0j +  0k is of order 1

q=0+1i + 1j +  1k is of order 2

q=0+1i + 1j +  2k is of order 2

q=0+1i + 2j +  1k is of order 2

q=0+1i + 2j +  2k is of order 2

q=0+2i + 1j +  1k is of order 2

q=0+2i + 1j +  2k is of order 2

q=0+2i + 2j +  1k is of order 2

q=0+2i + 2j +  2k is of order 2

number of nil potent elements=9=3^2
\end{verbatim}
\begin{verbatim}
To find the nil potent elements  q=a+bi+cj+dk.
Enter n=
5
q=0+0i + 0j +  0k is of order 1

q=0+0i + 1j +  2k is of order 2

q=0+0i + 1j +  3k is of order 2

q=0+0i + 2j +  1k is of order 2

q=0+0i + 2j +  4k is of order 2

q=0+0i + 3j +  1k is of order 2

q=0+0i + 3j +  4k is of order 2

q=0+0i + 4j +  2k is of order 2

q=0+0i + 4j +  3k is of order 2

q=0+1i + 0j +  2k is of order 2

q=0+1i + 0j +  3k is of order 2

q=0+1i + 2j +  0k is of order 2

q=0+1i + 3j +  0k is of order 2

q=0+2i + 0j +  1k is of order 2

q=0+2i + 0j +  4k is of order 2

q=0+2i + 1j +  0k is of order 2

q=0+2i + 4j +  0k is of order 2

q=0+3i + 0j +  1k is of order 2

q=0+3i + 0j +  4k is of order 2

q=0+3i + 1j +  0k is of order 2

q=0+3i + 4j +  0k is of order 2

q=0+4i + 0j +  2k is of order 2

q=0+4i + 0j +  3k is of order 2

q=0+4i + 2j +  0k is of order 2

q=0+4i + 3j +  0k is of order 2

number of nil potent elements=25=5^2
\end{verbatim}
\end{example}
An element $b = b^{n}$ is called \textit{n-idempotent} and more specifically, for $n = 3$, such an element
is called a \textit{tripotent}. A tripotent will be called \textit{genuine}
if it is not idempotent (so=$ 0,1$), not the negative of an idempotent nor an involution(unit of order $2$).
\begin{proposition} \cite{gc}
If  $q=w+xi+yj+zk \in \mathcal{H_{\mathbb{R}}}$  is a tripotent, then
\begin{enumerate}
  \item there is no genuine tripotent with $w=0$; \\
  \item $q^{2}$ and $1-q^{2}$ are both idempotents; the converse fails ($2$ is not tripotent in $\mathbb{Z}_{12}$); \\
  \item $q^{2}$ and $1-q^{2}$ are orthogonal; \\
  \item $1 -q -q^{2}$ is an involution; \\
  \item  and if $q$ is not an idempotent, then $1 -q -q^{2}$ is an involution; \\
  \item $\mathcal{H_{\mathbb{R}}}q^{2}\oplus\mathcal{H_{\mathbb{R}}}(1-q^{2}) = \mathcal{H_{\mathbb{R}}}$ is the Pierce decomposition into left (principal) ideals. A symmetric decomposition into right ideals also holds.
\end{enumerate}
\end{proposition}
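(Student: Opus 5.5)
The plan is to work entirely with the single relation $q^{3}=q$ and with the scalar/vector splitting $q=w+u$ already used in the paper, where $u=xi+yj+zk$. Two facts from the introduction carry over verbatim to a commutative ring $\mathbf{R}$, because they only use the multiplication rules: $u^{2}=-(x^{2}+y^{2}+z^{2})=-\|u\|^{2}$ is a scalar, and $w$ is central. Items 2 through 6 are formal consequences of $q^{3}=q$ in any associative unital ring, so I would prove them first. Item 1 is the only one that uses the quaternion structure.

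\textbf{Item 2.} From $q^{3}=q$ we get $q^{4}=q^{2}$, so $(q^{2})^{2}=q^{2}$. Then $(1-q^{2})^{2}=1-2q^{2}+q^{4}=1-q^{2}$. For the failure of the converse I would simply quote the paper's example in $\mathbb{Z}_{12}$ (embedded as scalar quaternions): $2^{2}=4$ is idempotent but $2^{3}=8\neq 2$.

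\textbf{Item 3.} Orthogonality is $q^{2}(1-q^{2})=q^{2}-q^{4}=0$, and likewise $(1-q^{2})q^{2}=0$.

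\textbf{Items 4 and 5.} Expand and reduce with $q^{3}=q$ and $q^{4}=q^{2}$:
$$(1-q-q^{2})^{2}=1+q^{2}+q^{4}-2q-2q^{2}+2q^{3}=1.$$
So $1-q-q^{2}$ is its own inverse, i.e. an involution. This holds whether or not $q$ is idempotent, so item 5 follows from item 4.

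\textbf{Item 6.} Put $e=q^{2}$ and $f=1-q^{2}$. These are orthogonal idempotents with $e+f=1$. Every $h\in\mathcal{H}_{\mathbf{R}}$ can be written $h=he+hf$, which gives $\mathcal{H}_{\mathbf{R}}=\mathcal{H}_{\mathbf{R}}e+\mathcal{H}_{\mathbf{R}}f$. The sum is direct: if $ae=bf$, multiplying on the right by $e$ gives $ae=ae^{2}=bfe=0$. The right-ideal version is symmetric, using $h=eh+fh$.

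\textbf{Item 1.} Suppose $w=0$, so $q=u$ and $q^{2}=-N$ with $N=x^{2}+y^{2}+z^{2}\in\mathbf{R}$. The tripotent condition becomes $(1+N)x=(1+N)y=(1+N)z=0$. Item 2 forces the scalar $-N$ to be idempotent. Thus $e:=-N$ is a scalar idempotent with $e u=u$, since $(1+N)u=0$ says exactly $Nu=-u$. I would then split $\mathbf{R}=e\mathbf{R}\times(1-e)\mathbf{R}$. On the $(1-e)$-component, $u$ vanishes. On the $e$-component, $q^{2}=-N=e$ acts as the identity, so $q$ is an involution of that factor. The aim is to conclude that $q$ is of one of the excluded types; when $e=1$ this says exactly that $q^{2}=1$, so $q$ is a unit of order $2$.

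\textbf{Main obstacle.} The hard step is item 1 when $e\neq 0,1$, that is, when $\mathbf{R}$ has nontrivial idempotents. Then $q$ is an involution only on one factor and zero on the other, which a priori is none of the excluded types. First I would reread the definition of ``genuine'' in \cite{gc} componentwise (or under an indecomposability hypothesis on $\mathbf{R}$) and check whether item 1 is meant in that sense. If it is not, I would search small rings such as $\mathbb{Z}_{6}$ or $\mathbb{Z}_{12}$ for a counterexample. In that case item 1 would hold only under the extra assumption that $\mathbf{R}$ has no nontrivial idempotents, for instance when $\mathbf{R}$ is local.
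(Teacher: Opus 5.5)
\textbf{Items 2--6.} These agree with the paper's proof: everything follows formally from $q^{3}=q$ and $q^{4}=q^{2}$. For item 6 you also supply the directness check that the paper omits.

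\textbf{Item 1: the gap cannot be closed.} You leave item 1 open, and your suspicion is correct: the statement is false in the paper's generality. The paper passes from $(-x^{2}-y^{2}-z^{2})q=q$ to $-x^{2}-y^{2}-z^{2}=1$, that is, it cancels $q$. This cancellation is valid when $\mathbf{R}$ is a field or domain and $q\neq 0$ (the case $q=0$ is harmless, since $0$ is idempotent). It is not valid over the rings $\mathbb{Z}_{n}$ the paper works with.

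Take $q=3i$ in $\mathcal{H}_{\mathbb{Z}_{6}}$, so $w=0$.
\begin{itemize}
\item $q^{2}=-9=3$ and $q^{3}=9i=3i=q$, so $q$ is a tripotent.
\item $q^{2}=3\neq q$, so $q$ is not idempotent.
\item $-q=3i=q$, so $q$ is not the negative of an idempotent.
\item $q^{2}\neq 1$, and its norm $9=3$ is not a unit of $\mathbb{Z}_{6}$, so $q$ is not an involution.
\end{itemize}
Hence $q$ is a genuine tripotent with $w=0$. Your ``main obstacle'' paragraph should be replaced by this counterexample.

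\textbf{What your splitting argument proves.} It gives the correct version of item 1. A pure vector $u=xi+yj+zk$ is a tripotent if and only if $e:=-(x^{2}+y^{2}+z^{2})$ is idempotent and $eu=u$. Such a $u$ is genuine if and only if $e\neq 0,1$:
\begin{itemize}
\item if $e=0$, then $u=eu=0$;
\item if $e=1$, then $u^{2}=1$;
\item if $e\neq 0,1$, the nonzero scalar $u^{2}=e$ can equal none of $u$, $-u$, $1$.
\end{itemize}
So item 1 holds exactly when no idempotent $e\neq 0,1$ of $\mathbf{R}$ has $-e=x^{2}+y^{2}+z^{2}$ with $x,y,z\in e\mathbf{R}$. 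In particular it holds whenever $\mathbf{R}$ has no nontrivial idempotents (fields, domains, local rings).

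Your phrase ``only under'' is therefore slightly too strong. For example, $\mathbb{R}\times\mathbb{R}$ has nontrivial idempotents but still satisfies item 1.

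If $\mathcal{H}_{\mathbb{R}}$ in the statement is read literally as the real quaternions, item 1 is true for a trivial reason. There $1+x^{2}+y^{2}+z^{2}$ is a unit, which forces $q=0$; the case $q^{2}=1$ that the paper reaches never occurs.

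\textbf{A caution shared with the paper.} In items 4--5 you prove only $(1-q-q^{2})^{2}=1$. Under the paper's gloss ``unit of order $2$'', item 4 fails for $q=0$, and item 5 fails for $q=-1$ when $2\neq 0$. In both cases $1-q-q^{2}=1$.
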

\begin{proof}
\noindent
\begin{enumerate}
  \item When $w=0$ $q=xi+yj+zk.$ This $ \implies$ $q^{2}=-x^{2}-y^{2}-z^{2}$ \\ and $q^{3}=q$ $\implies$ $(-x^{2}-y^{2}-z^{2})q=q$  $\implies$ $(-x^{2}-y^{2}-z^{2})=1 \implies q^{2} =1$. This is an involution. $\therefore \  \ q $ cannot be genuine.
      \item The statement is evidently true.. 
  \item $q^{2}(1-q^{2})=q^{2} -q^{4}=q^{2}-q^{2}=0$ as $q^{3}=q.$ \\
  $\therefore $ $q^{2}$ and $1-q^{2}$ are orthogonal.
  \item $(1 -q -q^{2})^{2}=1+q^{2}+q^{4}-2q-2q^{2}+2q^{3}=1+q^{2}+q^{2}-2q-2q^{2}+2q=1.$ \\
  $\therefore $  $1 -q -q^{2}$ is an involution.
  \item Same as above as we did not use the property of idempotent above.
  \item Since the sum of the orthogonal idempotents is $1$, the statement follows. 
  
\end{enumerate}
\end{proof}
\begin{proposition} \label{rd21}
Let $P=w_{1}+x_{1}i+y_{1}j+z_{1}k$ and $Q=w_{2}+x_{2}i+y_{2}j+z_{2}k$ be any two elements of $\mathcal{H}_{\mathbf{R}}$, where $\mathbf{R}$ is a commutative unital ring and if $\mathbf{R }$ is an integral domain or $2 \in U(\mathbf{R})$, then $PQ=QP$ $\iff$ the following simultaneous equations are satisfied:\\ $$y_{1}z_{2}=z_{1}y_{2}$$ $$z_{1}x_{2}=x_{1}z_{2}$$ $$x_{1}y_{2}=y_{1}x_{2}$$. 
\end{proposition}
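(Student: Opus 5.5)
The plan is to compute the commutator $PQ-QP$ directly from the Hamilton product formula of Section 1, which holds verbatim in $\mathcal{H}_{\mathbf{R}}$, and then read off when it vanishes. First I write out $PQ$ with $q_{1}=P$, $q_{2}=Q$. I obtain $QP$ by interchanging the indices $1\leftrightarrow 2$ in that formula, which is legitimate because $\mathbf{R}$ is commutative.

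Next I compare the two products term by term. The scalar part $w_{1}w_{2}-x_{1}x_{2}-y_{1}y_{2}-z_{1}z_{2}$ is symmetric in the indices, so it cancels in $PQ-QP$. In each vector component, the terms $w_{1}x_{2}+w_{2}x_{1}$, $w_{1}y_{2}+w_{2}y_{1}$ and $w_{1}z_{2}+w_{2}z_{1}$ are also symmetric and cancel. The remaining cross-product terms are antisymmetric, so they double. The expected result is
$$PQ-QP=2(y_{1}z_{2}-z_{1}y_{2})\,i+2(z_{1}x_{2}-x_{1}z_{2})\,j+2(x_{1}y_{2}-y_{1}x_{2})\,k .$$
Hence $PQ=QP$ if and only if $2(y_{1}z_{2}-z_{1}y_{2})=0$, $2(z_{1}x_{2}-x_{1}z_{2})=0$ and $2(x_{1}y_{2}-y_{1}x_{2})=0$ in $\mathbf{R}$. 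This uses only that $1,i,j,k$ form a free basis of $\mathcal{H}_{\mathbf{R}}$ over $\mathbf{R}$.

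The reverse implication is now immediate in any commutative ring: if the three displayed equations hold, every component of the commutator is zero. For the forward implication I must cancel the factor $2$. When $2\in U(\mathbf{R})$, I multiply each equation by $2^{-1}$. When $\mathbf{R}$ is an integral domain, $2a=0$ forces $a=0$ provided $2\neq 0$ in $\mathbf{R}$.

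The main obstacle is the integral domain case. The argument needs $\operatorname{char}\mathbf{R}\neq 2$, and I expect this hypothesis is genuinely required. For instance, over $\mathbb{Z}_{2}$ the ring $\mathcal{H}_{\mathbb{Z}_{2}}$ is commutative, since $ij=k=-k=ji$, yet $P=i$, $Q=j$ violate $x_{1}y_{2}=y_{1}x_{2}$. I would therefore state explicitly that the integral domain case is to be read with $2\neq 0$, i.e.\ characteristic different from $2$. Under that reading the cancellation is valid, and the proposition follows.
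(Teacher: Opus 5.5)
Your proposal is correct and follows the paper's proof: expand both products, obtain $PQ-QP=2(y_{1}z_{2}-z_{1}y_{2})i+2(z_{1}x_{2}-x_{1}z_{2})j+2(x_{1}y_{2}-y_{1}x_{2})k$, and cancel the factor $2$. Your caveat is also right, and the paper's proof passes over it: in an integral domain of characteristic $2$ the cancellation fails, and your example $P=i$, $Q=j$ in the commutative ring $\mathcal{H}_{\mathbb{Z}_{2}}$ shows the forward implication is genuinely false there, so the integral-domain hypothesis must be read as requiring $2\neq 0$.
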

\begin{proof}
$PQ=(w_{1}w_{2}-x_{1}x_{2}-y_{1}y_{2}-z_{1}z_{2})+(w_{1}x_{2}+x_{1}w_{2}+y_{1}z_{2}-z_{1}y_{2})i+(w_{1}y_{2}+y_{1}w_{2}-x_{1}z_{2}+z_{1}x_{2})j+(w_{1}z_{2}+z_{1}w_{2}+x_{1}y_{2}-y_{1}x_{2})k$ \\
$QP=(w_{1}w_{2}-x_{1}x_{2}-y_{1}y_{2}-z_{1}z_{2})+(w_{1}x_{2}+x_{1}w_{2}-y_{1}z_{2}+z_{1}y_{2})i+(w_{1}y_{2}+y_{1}w_{2}+x_{1}z_{2}-z_{1}x_{2})j+(w_{1}z_{2}+z_{1}w_{2}-x_{1}y_{2}+y_{1}x_{2})k$ \\
$PQ-QP=2(y_{1}z_{2}-z_{1}y_{2})i+2(z_{1}x_{2}-x_{1}z_{2})j+2(x_{1}y_{2}-y_{1}x_{2})k$ \\ $PQ=QP \iff $ 
$$2(y_{1}z_{2}-z_{1}y_{2})=0$$ 
$$2(z_{1}x_{2}-x_{1}z_{2})=0$$ 
$$2(x_{1}y_{2}-y_{1}x_{2})=0$$ Since $\mathbf{R }$ is an integral domain or $2 \in U(\mathbf{R})$, the equations follow.
\end{proof}
We define $\mathcal{S}_{\mathbf{R}}=\{ w+a(i+j+k) | w,a \in \mathbf{R}\}$. Clearly the elements of $\mathcal{S}_{\mathbf{R}}$ are satisfying the conditions laid in proposition \ref{rd21} and therefore it forms a commutative subring of $\mathcal{H}_{\mathbf{R}}$ and its group of units given by $U(\mathcal{S}_{\mathbf{R}})=\{ w+a(i+j+k) | w^{2}+3a^{2} \in U(\mathbf{R})\}$.
\begin{proposition}
$\mathcal{S}_{\mathbf{R}}$ is a commutative unital subring of $\mathcal{H}_{\mathbf{R}}$ which is not an ideal of $\mathcal{H}_{\mathbf{R}}$.
\end{proposition}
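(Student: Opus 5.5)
The plan is to verify the subring properties directly and then exhibit one explicit product that leaves $\mathcal{S}_{\mathbf{R}}$.

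\emph{Subring.} I would first check that $\mathcal{S}_{\mathbf{R}}$ is closed under the ring operations. It contains $1=1+0(i+j+k)$ and $0$. Subtraction is componentwise, so it is closed under differences. For products, set $u=i+j+k$. By the Hamilton rules, $u^{2}=i^{2}+j^{2}+k^{2}+(ij+ji)+(jk+kj)+(ki+ik)=-3$, since each anticommuting pair cancels. Hence
$$(w_{1}+a_{1}u)(w_{2}+a_{2}u)=(w_{1}w_{2}-3a_{1}a_{2})+(w_{1}a_{2}+w_{2}a_{1})u\in\mathcal{S}_{\mathbf{R}}.$$
This expression is symmetric in the indices $1$ and $2$. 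So commutativity follows directly, with no need for the hypotheses of Proposition \ref{rd21}; this matters because the statement places no condition on $\mathbf{R}$. Associativity and distributivity are inherited from $\mathcal{H}_{\mathbf{R}}$.

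\emph{Not an ideal.} I would take $i\in\mathcal{H}_{\mathbf{R}}$ and $u\in\mathcal{S}_{\mathbf{R}}$ and compute
$$iu=i^{2}+ij+ik=-1+k-j.$$
This has $j$-coefficient $-1$ and $k$-coefficient $1$. Membership in $\mathcal{S}_{\mathbf{R}}$ would force these to be equal, i.e.\ $-1=1$ in $\mathbf{R}$, or $2=0$. So whenever $2\neq 0$ in $\mathbf{R}$, $iu\notin\mathcal{S}_{\mathbf{R}}$ and $\mathcal{S}_{\mathbf{R}}$ is not a left ideal. A symmetric computation, $ui=-1-k+j$, shows it is not a right ideal either.

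\emph{Characteristic $2$.} This is the main obstacle. The case $2=0$ is not excluded by the statement, and there the witness above fails. I would instead use the proper subring argument: $1\in\mathcal{S}_{\mathbf{R}}$, and any ideal containing $1$ is all of $\mathcal{H}_{\mathbf{R}}$. But $i\notin\mathcal{S}_{\mathbf{R}}$ whenever $1\neq 0$, since membership would require its $i,j,k$-coefficients $1,0,0$ to be equal. Hence $\mathcal{S}_{\mathbf{R}}$ is not an ideal in any characteristic, assuming $\mathbf{R}$ is nonzero. This argument alone suffices for all cases, but I would present the explicit products $iu$ and $ui$ as well, since they illustrate the failure of absorption concretely.
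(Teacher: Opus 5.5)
Your proof is correct and follows the paper's approach: check directly that $\mathcal{S}_{\mathbf{R}}$ is a commutative unital subring, then show absorption fails. The paper only asserts both halves (``straightforward'', and $rq$ ``need not'' lie in $\mathcal{S}_{\mathbf{R}}$, with no witness). Its preceding remark also routes commutativity through Proposition~\ref{rd21}, whose hypotheses are really only needed for the converse direction. Your identity $u^{2}=-3$, with the resulting symmetric product formula, makes the subring and commutativity claims complete over any $\mathbf{R}$. Your observation that a proper subring containing $1$ cannot be a left or right ideal settles the non-ideal claim over every nonzero $\mathbf{R}$. You also correctly flag the zero ring, where $\mathcal{S}_{\mathbf{R}}=\mathcal{H}_{\mathbf{R}}=0$, as the only exception.

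One correction: the ``characteristic $2$ obstacle'' does not exist. You compared only the $j$- and $k$-coefficients of $iu=-1-j+k$. Its $i$-coefficient is $0$, and membership in $\mathcal{S}_{\mathbf{R}}$ requires all three imaginary coefficients to be equal. So $iu\in\mathcal{S}_{\mathbf{R}}$ would force $0=-1$, i.e.\ $1=0$, and your explicit witness already works when $2=0$. The case split is harmless but unnecessary.
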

\begin{proof}
It is very straight forward to prove $\mathcal{S}_{\mathbf{R}}$ is a commutative unital subring of $\mathcal{H}_{\mathbf{R}}$. For each $r \in \mathcal{H}_{\mathbf{R}}$ and $q \in \mathcal{S}_{\mathbf{R}}$, $rq$ need not be in the form of elements in $\mathcal{S}_{\mathbf{R}}$. Hence the proof. 
\end{proof}
\begin{theorem} \label{rd22}
$\mathcal{S}_{\mathbf{Z}_{n}}$ is a field if and only if $n>3$, $n$ is a prime with $n-3$ has no square roots in $\mathbf{Z}_{n}$. Also, \\
 $ \begin{array}{ll}
    |U(\mathcal{S}_{\mathbf{Z}_{n}})| &= n^{2}-1 \quad \ \ \hbox{if $n$ is a prime and $(n-3)$ does not have  a square root in $\mathbf{Z}_{n}$;}  \\
          & =(n-1)^{2} \quad \hbox{if $n$ is a prime and $(n-3$) has  square roots in $\mathbf{Z}_{n}$.}  
   \end{array}$
\end{theorem}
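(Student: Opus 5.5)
The plan is to identify $\mathcal{S}_{\mathbb{Z}_n}$ with a quotient of a polynomial ring and read everything off from the factorisation of a single quadratic. Put $u=i+j+k$. By the multiplication rules, $u^{2}=-(1+1+1)=-3$, since the cross terms $ij+ji$, $jk+kj$, $ki+ik$ all vanish. Hence the $\mathbb{Z}_n$-algebra map $\mathbb{Z}_n[X]\to\mathcal{S}_{\mathbb{Z}_n}$, $X\mapsto u$, is surjective with kernel $(X^{2}+3)$: the elements $1,u$ are $\mathbb{Z}_n$-linearly independent in $\mathcal{H}_{\mathbb{Z}_n}$. Therefore
$$\mathcal{S}_{\mathbb{Z}_n}\cong \mathbb{Z}_n[X]/(X^{2}+3),$$
a ring with $n^{2}$ elements. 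This agrees with the earlier description $U(\mathcal{S}_{\mathbf{R}})=\{w+au \mid w^{2}+3a^{2}\in U(\mathbf{R})\}$, since $w^{2}+3a^{2}$ is the norm $(w+au)(w-au)$.

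\textbf{Field criterion.} The quotient is a field iff $n$ is prime and $X^{2}+3$ is irreducible over $\mathbb{Z}_n$, that is, iff $-3=n-3$ has no square root in $\mathbb{Z}_n$. If $n$ is composite, $\mathbb{Z}_n$ itself has zero divisors, which survive in $\mathcal{S}_{\mathbb{Z}_n}$, so it is not a field. For the small primes $n=2,3$ I will check directly that $n-3$ is a square ($-1\equiv1^{2}$ mod $2$; $0=0^{2}$ mod $3$), so neither gives a field. This is presumably why the statement excludes $n\le 3$, and it agrees with the criterion.

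\textbf{Counting units.} Let $n=p$ be prime. If $p-3$ is a non-square, $\mathcal{S}$ is a field of order $p^{2}$, so $|U|=p^{2}-1$. If $p-3$ has a square root, I split into two cases.
\begin{itemize}
\item If $p>3$, the roots $\pm r$ of $X^{2}+3$ are distinct (because $2r\neq0$ when $r^{2}=-3\neq0$). The Chinese remainder theorem then gives $\mathcal{S}\cong\mathbb{Z}_p\times\mathbb{Z}_p$, so $|U|=(p-1)^{2}$.
\item If $p=2$ or $p=3$, the polynomial has a repeated root: $X^{2}+3=(X+1)^{2}$ over $\mathbb{Z}_2$ and $X^{2}$ over $\mathbb{Z}_3$. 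Then $\mathcal{S}\cong\mathbb{Z}_p[Y]/(Y^{2})$, a local ring with $p^{2}-p$ units, not $(p-1)^{2}$.
\end{itemize}

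The main obstacle is this degenerate case $p\in\{2,3\}$: there the stated formula $(n-1)^{2}$ fails (it gives $1$ and $4$ instead of $2$ and $6$). So I would prove the unit count under the hypothesis $n>3$ and remark that $p=2,3$ must be excluded, or handled separately, in the second formula. The remaining steps, $u^{2}=-3$ and the identification of $\mathcal{S}_{\mathbb{Z}_n}$ with $\mathbb{Z}_n[X]/(X^{2}+3)$, are routine checks.
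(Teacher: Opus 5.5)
Your proposal is correct, and it takes a different route from the paper's.

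\textbf{The paper's route.} The paper works only with the norm. Using $U(\mathcal{S}_{\mathbf{R}})=\{w+a(i+j+k)\mid w^{2}+3a^{2}\in U(\mathbf{R})\}$, it counts the non-units of $\mathcal{S}_{\mathbf{Z}_p}$ as the solutions of $w^{2}=(p-3)a^{2}$. When $p-3$ is a non-square the only solution is $(0,0)$. When $g^{2}=p-3$ the solutions are $w=ga,(p-g)a$, giving $2(p-1)+1$ of them.

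\textbf{Your route.} You identify $\mathcal{S}_{\mathbf{Z}_n}\cong\mathbf{Z}_n[X]/(X^{2}+3)$ and read off the ring structure. For primes it is a field with $p^{2}$ elements, or $\mathbf{Z}_p\times\mathbf{Z}_p$, or the local ring $\mathbf{Z}_p[Y]/(Y^{2})$.

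\textbf{Comparison.}
\begin{itemize}
\item The paper's count is more elementary, but it proves only the ``if'' half of the field criterion and never treats composite $n$.
\item Your zero-divisor argument for composite $n$, together with the splitting of $X^{2}+3$ when $-3$ is a square, supplies the missing ``only if'' half.
\item Your route also shows where the paper's count silently needs a simple root. The claim that $w$ takes two values for each $a\neq 0$ requires $g\neq p-g$, and this fails exactly at $p=2,3$.
\end{itemize}

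\textbf{The case $p=2,3$.} Your values $|U(\mathcal{S}_{\mathbf{Z}_2})|=2$ and $|U(\mathcal{S}_{\mathbf{Z}_3})|=6$ are right, and the stated formula $(n-1)^{2}$ is wrong there. The paper's own proof of that case already assumes $n>3$. Its later corollary for $n=2^{s}3^{t}$ in fact uses the values $2$ and $6$. So restricting the second formula to $n>3$ is the correct reading of the theorem.
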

\begin{proof}
Suppose  $n>3$, $n$ is a prime with $n-3$ has no square roots in $\mathbf{Z}_{n}$. Since $n$ is prime, $0$ is the only zero divisor in $\mathbf{Z}_{n}$. For zero divisors in $\mathcal{S}_{\mathbf{Z}_{n}}$, $w^{2}+3a^{2}=0$. This $\implies$ $w^{2}=-3a^{2}=(n-3)a^{2}$ in $\mathbf{Z}_{n}$. Since $n-3$ has no square roots in $\mathbf{Z}_{n}$, the only solution of the equation $w^{2}=(n-3)a^{2}$ is $w=0$ and $a=0$. That is, the only zero divisor in $\mathcal{S}_{\mathbf{Z}_{n}}$ is $0=0+0(i+j+k)$. Therefore $\mathcal{S}_{\mathbf{Z}_{n}}$ is a field and 
 $|U(\mathcal{S}_{\mathbf{Z}_{n}})|= n^{2}-1$ as $|\mathcal{S}_{\mathbf{Z}_{n}}|=n^{2}$.\\
Suppose  $n>3$, $n$ is a prime with $n-3$ has  a square root in $\mathbf{Z}_{n}$, say, $g$. Since $n$ is prime, $0$ is the only zero divisor in $\mathbf{Z}_{n}$. This $\implies$ $w^{2}=-3a^{2}=(n-3)a^{2}=g^{2}a^{2}$ in $\mathbf{Z}_{n}$. Thus we have $w=ga, (n-g)a$, for $a=0,1,2,3,...,(n-1)$. When $a=0$, $w$ has only one value and for other $(n-1)$ values of $a$, $w$ will have two values. \\
 $\therefore \quad $ $|U(\mathcal{S}_{\mathbf{Z}_{n}})|= n^{2}-(2(n-1)+1)=(n-1)^{2}$. 
\end{proof}
\begin{conjecture} \label{rd23}
Let $p$ be a prime and $e>0$ be an integer. Then $|U(\mathcal{S}_{\mathbf{Z}_{p^{e}}})|=p^{2(e-1)} \times |U(\mathcal{S}_{\mathbf{Z}_{p}})|$.
\end{conjecture}
Assuming that the above conjecture is true. Then we will have the following theorem. 
\begin{theorem} \label{rd24}
If $n=p_{1}^{e_{1}}p_{2}^{e_{2}} \cdot p_{l}^{e_{l}}$, then $|\mathcal{S}_{\mathbf{Z}_{n}}|=\prod_{r=1}^{l} p_{r}^{2(e_{r}-1)} \times |U(\mathcal{S}_{\mathbf{Z}_{p_{r}}})|$.
\end{theorem}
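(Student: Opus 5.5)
The intended statement is about the unit group, $|U(\mathcal{S}_{\mathbf{Z}_{n}})|=\prod_{r=1}^{l} p_{r}^{2(e_{r}-1)} \times |U(\mathcal{S}_{\mathbf{Z}_{p_{r}}})|$. As literally written with $|\mathcal{S}_{\mathbf{Z}_{n}}|$ it would be false, since $|\mathcal{S}_{\mathbf{Z}_{n}}|=n^{2}$. The plan is to reduce to prime powers via the Chinese Remainder Theorem and then apply Conjecture \ref{rd23}, which we are told to assume.

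\textbf{Step 1: decompose $\mathcal{S}_{\mathbf{Z}_{n}}$ as a product.} By the Chinese Remainder Theorem there is a ring isomorphism $\mathbf{Z}_{n}\cong \prod_{r=1}^{l}\mathbf{Z}_{p_{r}^{e_{r}}}$. The ring $\mathcal{S}_{\mathbf{R}}$ is functorial in $\mathbf{R}$: it is the free $\mathbf{R}$-module on $1$ and $u=i+j+k$, with multiplication determined by $u^{2}=-3$. Indeed,
\[
(w_{1}+a_{1}u)(w_{2}+a_{2}u)=(w_{1}w_{2}-3a_{1}a_{2})+(w_{1}a_{2}+w_{2}a_{1})u,
\]
since the cross terms vanish by Proposition \ref{rd21}. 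Hence $\mathcal{S}_{\mathbf{R}}\cong \mathbf{R}[u]/(u^{2}+3)$. Because $\mathbf{R}[u]/(u^{2}+3)$ commutes with finite direct products of rings, applying the coordinatewise map gives
\[
\mathcal{S}_{\mathbf{Z}_{n}}\cong \prod_{r=1}^{l}\mathcal{S}_{\mathbf{Z}_{p_{r}^{e_{r}}}}.
\]
The one thing to check here is that the map $w+au\mapsto (w \bmod p_{r}^{e_{r}}+(a \bmod p_{r}^{e_{r}})u)_{r}$ is bijective and multiplicative. Bijectivity follows from CRT applied to the pair $(w,a)$. Multiplicativity follows from the displayed product formula, whose coefficients are polynomial in $w$ and $a$.

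\textbf{Step 2: pass to unit groups.} The unit group of a finite direct product of rings is the product of the unit groups, so
\[
|U(\mathcal{S}_{\mathbf{Z}_{n}})|=\prod_{r=1}^{l}|U(\mathcal{S}_{\mathbf{Z}_{p_{r}^{e_{r}}}})|.
\]
One can also see this directly from the description $U(\mathcal{S}_{\mathbf{R}})=\{w+au \mid w^{2}+3a^{2}\in U(\mathbf{R})\}$: an element of $\mathbf{Z}_{n}$ is a unit if and only if each of its CRT components is a unit.

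\textbf{Step 3: apply the conjecture.} Substituting Conjecture \ref{rd23} for each factor gives $|U(\mathcal{S}_{\mathbf{Z}_{p_{r}^{e_{r}}}})|=p_{r}^{2(e_{r}-1)}|U(\mathcal{S}_{\mathbf{Z}_{p_{r}}})|$, which yields the stated product.

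\textbf{Where the difficulty lies.} Granting the conjecture, nothing here is a real obstacle; the only care needed is the functoriality point in Step 1. The conjecture itself can in fact be proved by the same kind of reasoning. An element $w+au$ is a unit modulo $p^{e}$ if and only if its reduction is a unit modulo $p$, because $w^{2}+3a^{2}$ is a unit in $\mathbf{Z}_{p^{e}}$ exactly when it is nonzero mod $p$. Each residue pair $(w,a)$ mod $p$ has $p^{2(e-1)}$ lifts to $\mathbf{Z}_{p^{e}}$, which gives the factor $p^{2(e-1)}$ and lets the statement stand unconditionally.
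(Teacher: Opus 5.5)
Your argument is correct and is essentially the paper's proof. The CRT isomorphism $\mathbf{Z}_{n}\cong\prod_{r}\mathbf{Z}_{p_{r}^{e_{r}}}$ induces $\mathcal{S}_{\mathbf{Z}_{n}}\cong\prod_{r}\mathcal{S}_{\mathbf{Z}_{p_{r}^{e_{r}}}}$ by coordinatewise reduction, hence a product decomposition of the unit groups, and Conjecture~\ref{rd23} finishes; your reading of the left-hand side as $|U(\mathcal{S}_{\mathbf{Z}_{n}})|$ matches what the paper's proof actually establishes and what its corollaries use. Your closing lifting argument is also correct: $w+a(i+j+k)$ is a unit mod $p^{e}$ exactly when $p\nmid w^{2}+3a^{2}$, and each residue pair has $p^{2(e-1)}$ lifts, so it proves Conjecture~\ref{rd23} outright and makes the theorem unconditional rather than dependent on the paper's assumption.
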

\begin{proof}
Using the group version of Chinese Remainder Theorem, we have $$\mathbf{Z}_{n} \cong \mathbf{Z}_{p_{1}^{e_{1}}} \times \mathbf{Z}_{p_{2}^{e_{2}}} \times \cdots \times \mathbf{Z}_{p_{l}^{e_{l}}}$$
and 
$$ |U(\mathbf{Z}_{n})|= |U(\mathbf{Z}_{p_{1}^{e_{1}}})| \times |U(\mathbf{Z}_{p_{2}^{e_{2}}})| \times \cdots \times |U(\mathbf{Z}_{p_{l}^{e_{l}}})|. $$
The above isomorphism will induce another isomorphism $$\mathcal{S}_{\mathbf{Z}_{n}} \cong \mathcal{S}_{\mathbf{Z}_{p_{1}^{e_{1}}}} \times \mathcal{S}_{\mathbf{Z}_{p_{2}^{e_{2}}}} \times \cdots \times \mathcal{S}_{\mathbf{Z}_{p_{l}^{e_{l}}}}$$ and it is defined by  \\ $f(w+a(i+j+k))=(w \ mod p_{1}^{e_{1}}+a \ mod p_{1}^{e_{1}}(i+j+k), w \ mod p_{2}^{e_{2}}+a \ mod p_{2}^{e_{2}}(i+j+k), \cdots, w  \ mod p_{l}^{e_{l}}+a \ mod p_{l}^{e_{l}}(i+j+k))$ \\
and this will in turn induce another isomorphism $$U(\mathcal{S}_{\mathbf{Z}_{n}}) \cong U(\mathcal{S}_{\mathbf{Z}_{p_{1}^{e_{1}}}}) \times U(\mathcal{S}_{\mathbf{Z}_{p_{2}^{e_{2}}}}) \times \cdots \times U(\mathcal{S}_{\mathbf{Z}_{p_{l}^{e_{l}}}})$$
$$ \therefore \quad |U(\mathcal{S}_{\mathbf{Z}_{n}})| \cong |U(\mathcal{S}_{\mathbf{Z}_{p_{1}^{e_{1}}}})| \times |U(\mathcal{S}_{\mathbf{Z}_{p_{2}^{e_{2}}}})| \times \cdots \times |U(\mathcal{S}_{\mathbf{Z}_{p_{l}^{e_{l}}}})|. $$
The theorem will follow immediately by applying the conjecture \ref{rd23}.
\end{proof}
\begin{corollary}
If $n=2^{s}3^{t}$ with integers $s,t \geq 0$, $|U(\mathcal{S}_{\mathbf{Z}_{n}})| =n\phi(n).$
\end{corollary}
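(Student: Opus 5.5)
The plan is to reduce to prime powers and then count units directly, avoiding Conjecture \ref{rd23}. The counts for $p=2$ and $p=3$ are straightforward to do by hand.

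\textbf{Reduction.} By the Chinese Remainder decomposition in the proof of Theorem \ref{rd24}, which did not use the conjecture for this step, we have $U(\mathcal{S}_{\mathbf{Z}_{n}})\cong U(\mathcal{S}_{\mathbf{Z}_{2^{s}}})\times U(\mathcal{S}_{\mathbf{Z}_{3^{t}}})$. Both sides of the claimed identity are multiplicative over coprime factors, since $n\phi(n)=2^{s}\phi(2^{s})\cdot 3^{t}\phi(3^{t})$. So it suffices to show $|U(\mathcal{S}_{\mathbf{Z}_{p^{e}}})|=p^{e}\phi(p^{e})=p^{2e-1}(p-1)$ for $p\in\{2,3\}$. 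The case $e=0$ is trivial: the zero ring contributes the factor $1$.

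\textbf{Unit criterion.} As noted after Proposition \ref{rd21}, $w+a(i+j+k)$ is a unit exactly when $w^{2}+3a^{2}\in U(\mathbf{Z}_{p^{e}})$. In $\mathbf{Z}_{p^{e}}$ this means $p\nmid w^{2}+3a^{2}$, so whether a pair $(w,a)$ gives a unit depends only on $(w,a)$ modulo $p$. Hence $|U(\mathcal{S}_{\mathbf{Z}_{p^{e}}})|=p^{2(e-1)}\cdot N_{p}$, where $N_{p}$ is the number of pairs $(w,a)\in\mathbf{Z}_{p}^{2}$ with $w^{2}+3a^{2}\neq 0$. This incidentally proves Conjecture \ref{rd23} in general.

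\textbf{Computing $N_{p}$.}
\begin{itemize}
\item For $p=3$: $w^{2}+3a^{2}\equiv w^{2}$, so we need $w\neq 0$, giving $N_{3}=2\cdot 3=6=3\phi(3)$.
\item For $p=2$: $w^{2}+3a^{2}\equiv w+a \pmod 2$, so exactly two of the four pairs work, giving $N_{2}=2=2\phi(2)$.
\end{itemize}
In both cases $N_{p}=p(p-1)$, so $|U(\mathcal{S}_{\mathbf{Z}_{p^{e}}})|=p^{2e-2}\cdot p(p-1)=p^{e}\phi(p^{e})$, and multiplying the two prime-power factors gives $n\phi(n)$.

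\textbf{Main obstacle.} The key step is the reduction modulo $p$ of the unit condition. Without it one would depend on the unproved Conjecture \ref{rd23}, and it is also what makes the counting uniform in $e$.
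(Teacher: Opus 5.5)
Your proof is correct, and it is stronger than the paper's. The paper's proof simply substitutes into Theorem \ref{rd24}, and that theorem depends on the unproved Conjecture \ref{rd23}. The paper also assumes the base values $|U(\mathcal{S}_{\mathbf{Z}_{2}})|=2$ and $|U(\mathcal{S}_{\mathbf{Z}_{3}})|=6$ without deriving them; Theorem \ref{rd22} does not supply them. Finally, its exponent bookkeeping $2^{2(s-1)}\times 2\times 3^{2(t-1)}\times 6$ only makes sense for $s,t\ge 1$: at $t=0$ the $3$-part comes out as $2/3$ instead of $1$.

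You keep the same skeleton as the paper: the CRT splitting, a lifting factor $p^{2(e-1)}$, and base counts at $p=2,3$. The difference is that you replace the conjecture with an observation: $w^{2}+3a^{2}\in U(\mathbf{Z}_{p^{e}})$ iff $p\nmid w^{2}+3a^{2}$. This condition depends only on $(w,a)$ modulo $p$, so each admissible residue pair has exactly $p^{2(e-1)}$ lifts. This buys three things:
\begin{itemize}
\item the corollary becomes unconditional;
\item Conjecture \ref{rd23} is proved for every prime $p$, so Theorem \ref{rd24} is no longer conditional either;
\item the degenerate cases $s=0$ or $t=0$ are handled correctly.
\end{itemize}

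The one ingredient you import without proof is the unit criterion for $\mathcal{S}_{\mathbf{R}}$, and it is sound. With $u=i+j+k$ one has $u^{2}=-3$ and $(w+au)(w-au)=w^{2}+3a^{2}$. The map $w+au\mapsto w-au$ is a ring automorphism of the commutative ring $\mathcal{S}_{\mathbf{R}}$, so this norm is multiplicative. Hence $w+au$ is a unit iff $w^{2}+3a^{2}$ is, and in that case the inverse lies in $\mathcal{S}_{\mathbf{R}}$.
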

\begin{proof}
Using the theorem \ref{rd24}, we have $|U(\mathcal{S}_{\mathbf{Z}_{n}})|=2^{2(s-1)} \times 2 \times 3^{2(t-1)} \times 6= 2^{2s} \times 3^{2s-1}=(2^{s}\times 3^{t}) \times (2^{s} \times 3^{t-1})=n \phi(n).$
\end{proof}
\begin{corollary}
If $n=p_{1}^{e_{1}}p_{2}^{e_{2}} \cdots p_{l}^{e_{l}}$ with $3<p_{1}<p_{2}< \cdots <p_{l}$ and if all $p_{r}-3$ have square roots in $\mathbf{Z}_{p_{r}}$ for $r=1,2, \cdots l, $ then $|U(\mathcal{S}_{\mathbf{Z}_{n}})|=(\phi(n))^{2}.$
\end{corollary}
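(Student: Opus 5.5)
The plan is to combine the multiplicative decomposition behind Theorem \ref{rd24} with the prime-level count in Theorem \ref{rd22}, and then recognise the resulting product as $(\phi(n))^{2}$.

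First, by Theorem \ref{rd24}, which rests on the Chinese Remainder isomorphism $U(\mathcal{S}_{\mathbf{Z}_{n}}) \cong \prod_{r} U(\mathcal{S}_{\mathbf{Z}_{p_{r}^{e_{r}}}})$ together with Conjecture \ref{rd23}, we obtain
$$|U(\mathcal{S}_{\mathbf{Z}_{n}})| = \prod_{r=1}^{l} p_{r}^{2(e_{r}-1)} \, |U(\mathcal{S}_{\mathbf{Z}_{p_{r}}})|.$$
The theorem's displayed statement writes $|\mathcal{S}_{\mathbf{Z}_{n}}|$. I will read it as the unit count, as the proof of Theorem \ref{rd24} makes clear, and use it in that form.

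Next, each $p_{r}>3$ is prime and $p_{r}-3$ has a square root in $\mathbf{Z}_{p_{r}}$. The second case of Theorem \ref{rd22} therefore gives $|U(\mathcal{S}_{\mathbf{Z}_{p_{r}}})| = (p_{r}-1)^{2}$. Substituting, each factor becomes
$$p_{r}^{2(e_{r}-1)}(p_{r}-1)^{2} = \bigl(p_{r}^{e_{r}-1}(p_{r}-1)\bigr)^{2} = \bigl(\phi(p_{r}^{e_{r}})\bigr)^{2}.$$

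Finally, $\phi$ is multiplicative over the pairwise coprime prime powers, so
$$\prod_{r}\bigl(\phi(p_{r}^{e_{r}})\bigr)^{2} = \Bigl(\prod_{r}\phi(p_{r}^{e_{r}})\Bigr)^{2} = (\phi(n))^{2},$$
which is the claim.

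Once the earlier results are granted, no step is a genuine obstacle. The only delicate point is that the argument inherits its dependence on Conjecture \ref{rd23} through Theorem \ref{rd24}, so the corollary is conditional on that conjecture, as the paper itself says.

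If I wanted to remove this dependence, I would prove the conjecture directly by reduction modulo $p$. The element $w+a(i+j+k)$ is a unit exactly when $w^{2}+3a^{2}$ is a unit in $\mathbf{Z}_{p^{e}}$, which happens exactly when $w^{2}+3a^{2} \not\equiv 0 \pmod p$. So the unit condition depends only on $(w \bmod p,\ a \bmod p)$. Each residue pair has $p^{e-1}\cdot p^{e-1}=p^{2(e-1)}$ lifts, which gives the factor $p^{2(e-1)}$ immediately.
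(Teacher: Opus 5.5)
Your argument is correct and is essentially the paper's proof. You invoke Theorem~\ref{rd24}, substitute $(p_r-1)^2$ from the second case of Theorem~\ref{rd22}, and recognise $\prod_r p_r^{2(e_r-1)}(p_r-1)^2$ as $(\phi(n))^2$, which the paper writes as $\frac{n^2}{(p_1\cdots p_l)^2}\prod_r (p_r-1)^2$. Your closing reduction-mod-$p$ remark is also correct and proves Conjecture~\ref{rd23} for every prime $p$, so, unlike the paper's conditional version, your proof of the corollary is unconditional.
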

\begin{proof}
By theorem \ref{rd24}, $|\mathcal{S}_{\mathbf{Z}_{n}}|=\prod_{r=1}^{l} p_{r}^{2(e_{r}-1)} \times |U(\mathcal{S}_{\mathbf{Z}_{p_{r}}})|$. Since every $p_{r}-3$ has square roots in $\mathbf{Z}_{p_{r}}$, $|U(\mathcal{S}_{\mathbf{Z}_{p_{r}}})|=(p_{r}-1)^{2}$ by theorem \ref{rd22}. Rearranging the factors, we get 
$|U(\mathcal{S}_{\mathbf{Z}_{n}})|=\dfrac{n^{2}}{(p_{1}p_{2} \cdots p_{l})^{2}} \times (p_{1}-1)^{2} \times (p_{2}-1)^{2} \cdots (p_{l}-1)^{2}=(\phi(n))^{2}$
\end{proof}
\begin{example}
This example deals with the cardinality of $U(\mathcal{S}_{\mathbf{Z}_{n}})$ for selective integral values of $n$. For details, see programme $3$ in appendix. \\
When $n=5$. \\
$0^{2}=0, 1^{2}=1,2^{2}=4,3^{2}=4,4^{2}=1$ indicate that $5$ is a prime with $5-3=2$  does not have a square root in $\mathbf{Z}_{5}$ and the only zero divisor is zero in $\mathbf{Z}_{5}$.
So, the number of zero divisors in $\mathcal{S}_{\mathbf{Z}_{n}}$ is the no. of solutions of the equation $w^{2}+3a^{2 }=0$ which implies $w^{2}=-3a^{2}=(5-3)a^{2}=2a^{2}$. This equation has only one solution and therefore $\mathcal{S}_{\mathbf{Z}_{n}}$ is a field and $|U(\mathcal{S}_{\mathbf{Z}_{n}})|=5^{2}-1=24$. \\
When $n=7$. \\
$0^{2}=0, 1^{2}=1,2^{2}=4,3^{2}=2,4^{2}=2, 5^{2}=4, 6^{2}=1$ indicate that $7$ is a prime with $7-3=4$  has  square roots 2 and 5  in $\mathbf{Z}_{7}$ and the only zero divisor is zero in $\mathbf{Z}_{7}$.
So, the number of zero divisors in $\mathcal{S}_{\mathbf{Z}_{n}}$ is the no. of solutions of the equation $w^{2}+3a^{2 }=0$ which implies $w^{2}=-3a^{2}=(7-3)a^{2}=4a^{2}$ and $w=2a,\ 5a$. This equation has $(1+2\times (7-1))=13$ and therefore $\mathcal{S}_{\mathbf{Z}_{n}}$ is not a field and $|U(\mathcal{S}_{\mathbf{Z}_{n}})|=7^{2}-13=36=(7-1)^{2}$. \\
When $n=6=2^{1}\times 3^{1}.$ \\
$0^{2}=0, 1^{2}=1,2^{2}=4,3^{2}=3,4^{2}=4, 5^{2}=1$ indicate that$\sqrt{0}=0, \sqrt{1}=1,5, \sqrt{4}=2,4, \sqrt{3}=3$ in $\mathbf{Z}_{6}$.The zero divisors in $\mathbf{Z}_{6}$ are $0,2,3,4$ and the units are $1,5$ in $\mathbf{Z}_{6}$. 
So, the number of units in $\mathcal{S}_{\mathbf{Z}_{n}}$ is the no. of solutions of the equation $w^{2}+3a^{2 }=1$ and $w^{2}+3a^{2}=5$ which imply $w^{2}=-3a^{2}+1=(6-3)a^{2}+1=3a^{2}+1$ and $w^{2}=3a^{2}+5$. These equations have $12$ solutions and therefore  $|U(\mathcal{S}_{\mathbf{Z}_{n}})|=12=6 \times 2=n \phi(n)$. \\
When $n=18=2^{1}\times 3^{2}.$ \\
$0^{2}=0, 1^{2}=1,2^{2}=4,3^{2}=9,4^{2}=16, 5^{2}=7, 6^{2}=0, 7^{2}=13, 8^{2}=10, 9^{2}=9, 10^{2}=10, 11^{2}=13, 12^{2}=0, 13^{2}=7, 14^{2}=16, 15^{2}=9, 16^{2}=4, 17^{2}=1$ indicate that $\sqrt{0}=0,6,12, \sqrt{1}=1,17, \sqrt{4}=2,16, \sqrt{9}=3,9,15, \sqrt{16}=4,14, \sqrt{7}=5,18,  \sqrt{13}=7,1, \sqrt{10}=8,10, \sqrt{}$ in $\mathbf{Z}_{6}$.The units in $\mathbf{Z}_{18}$ are $1,5,7,11,13,17$. 
So, the number of units in $\mathcal{S}_{\mathbf{Z}_{n}}$ is the no. of solutions of the equation $w^{2}+3a^{2 }=1$, $w^{2}+3a^{2}=5$, $w^{2}+3a^{2 }=7$, $w^{2}+3a^{2}=11$ , $w^{2}+3a^{2 }=13$, $w^{2}+3a^{2}=17$.  These equations have $108$ solutions and therefore  $|U(\mathcal{S}_{\mathbf{Z}_{n}})|=108=18 \times 6=n \phi(n)$. \\
\end{example}
\section{Halidon Rings}
In this section, we state some new results and the results essential for the construction of ring of quaternions which are also halidon rings. This is a rich class of noncommutative halidon rings. 
 The readers who are interested in the properties of halidon ring, can refer to \cite{at}, \cite{ath} and \cite{ath1}. \\
A primitive $m^{th}$ root of unity in a ring with unit element is completely different from that of in a field, because of the presence of nonzero zero divisors.  So we need a separate definition for a primitive $m^{th}$ root of unity. An element $\omega $ in a ring $R$ is called a  \textit{primitive} $m^{th}$ root if $m$ is the least positive integer such that  $\omega^{m}=1$ and
\begin{eqnarray*}
\sum_{r=0}^{m-1} \omega^{r(i-j)}&=& m, \quad  i= j (\ mod \ m )\\ &=& 0, \quad  i\neq j (\ mod \ m ). \end{eqnarray*} \\
More explicitly,
\begin{eqnarray*}
1+ \omega^{r}+(\omega^{r})^{2}+(\omega^{r})^{3}+(\omega^{r})^{4}+......+(\omega^{r})^{m-1}&=& m, \quad  r=0 \\ &=& 0, \quad 0<r\leq m-1. \end{eqnarray*} \\
A ring $R$ with unity is called a \textit{halidon} ring with index $m$ if there is a  primitive $m^{th}$ root of unity and $m$ is invertible in $R$. 
The following theorem will give the necessary and sufficient conditions for a ring to be a halidon ring. 
\begin{theorem} \label{rd2} (A. Telveenus \cite{ath})
A finite commutative ring $R$ with unity is a halidon ring with index $m$ if and only if there is a primitive $m^{th}$ root of unity  $\omega$ such that $m$, $\omega^{d}-1 \in U(R)$; the unit group of $R$ for all divisors $d$ of $m$ and $d<m$.
\end{theorem}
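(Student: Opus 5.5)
The statement to prove is Theorem \ref{rd2}: for a finite commutative ring $R$ with unity, $R$ is a halidon ring with index $m$ if and only if there is an element $\omega$ with $\omega^{m}=1$ (of exact order $m$) such that $m\in U(R)$ and $\omega^{d}-1\in U(R)$ for every divisor $d$ of $m$ with $d<m$.

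The whole argument rests on the telescoping identity
\[
(\omega^{r}-1)\bigl(1+\omega^{r}+\omega^{2r}+\cdots+\omega^{(m-1)r}\bigr)=\omega^{rm}-1=0,
\]
which holds for every $r$ since $\omega^{m}=1$. Write $S_r=\sum_{t=0}^{m-1}\omega^{rt}$. The defining condition for $\omega$ to be primitive in the ring sense is that $S_r=0$ for $0<r\le m-1$; the case $r=0$ gives $S_0=m$ automatically.

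\textbf{Sufficiency.} Assume $m\in U(R)$ and $\omega^{d}-1\in U(R)$ for all proper divisors $d$ of $m$.
\begin{enumerate}
\item Fix $0<r<m$ and put $d=\gcd(r,m)$, a proper divisor of $m$.
\item Since $\gcd(r/d,\,m/d)=1$, Bezout gives integers $u,v$ with $ur+vm=d$. Using $\omega^{m}=1$ we get $\omega^{d}=(\omega^{r})^{u}$, with $u$ taken modulo $m$ so that it is nonnegative.
\item Hence $\omega^{d}-1=(\omega^{r}-1)\bigl(1+\omega^{r}+\cdots+\omega^{r(u-1)}\bigr)$. The left side is a unit by hypothesis, so $\omega^{r}-1$ divides a unit in a commutative ring and is itself a unit.
\item Multiplying the telescoping identity by $(\omega^{r}-1)^{-1}$ gives $S_r=0$.
\end{enumerate}
So $\omega$ is a primitive $m^{th}$ root in the ring sense, and together with $m\in U(R)$, $R$ is halidon with index $m$. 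Note that finiteness of $R$ is not used in this direction.

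\textbf{Necessity.} Assume $R$ is halidon with index $m$ and primitive root $\omega$. Then $m\in U(R)$ by definition, and it remains to show $\omega^{d}-1\in U(R)$ for each proper divisor $d$. Here finiteness is used: in a finite commutative ring every element is either a unit or a zero divisor. So suppose, for contradiction, that $\omega^{d}-1$ is not a unit. Then it is a zero divisor, and there is $a\neq 0$ with $a\omega^{d}=a$. It follows that $a\omega^{dt}=a$ for all $t$, and therefore
\[
a\,S_d=\sum_{t=0}^{m-1}a\,\omega^{dt}=m\,a .
\]
Since $0<d<m$, primitivity gives $S_d=0$, so $ma=0$. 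Because $m$ is a unit, $a=0$, a contradiction. Hence $\omega^{d}-1\in U(R)$. The same argument in fact works for every $0<r<m$, not only for divisors.

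\textbf{Main obstacle.} The only genuinely delicate step is this necessity direction. Without finiteness, "not a unit" does not imply "zero divisor", so the dichotomy is essential; I would check carefully that the hypothesis is used exactly there. Everything else is routine telescoping and Bezout.
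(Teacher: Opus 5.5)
Your proof is correct in both directions. The paper gives no proof of Theorem~\ref{rd2}; it is quoted from \cite{ath}, so there is no in-paper argument to compare yours against. Your sufficiency argument works in any commutative ring: reduce $r$ to $d=\gcd(r,m)$ by B\'ezout, conclude $\omega^{r}-1\in U(R)$, then cancel it from the telescoping identity. Your necessity argument is also complete: use the unit-or-zero-divisor dichotomy in a finite commutative ring, then $aS_d=ma$. You were right to read ``primitive'' on the right-hand side as ``of exact order $m$'', since otherwise sufficiency is immediate from the definition.

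One thing worth adding: finiteness is not actually needed for necessity. The ``main obstacle'' you flag comes from your route, not from the theorem. Since $\omega^{rm}=1$, a second telescoping gives
\[
(1-\omega^{r})\sum_{t=1}^{m-1}t\,\omega^{rt}=\sum_{t=1}^{m-1}\omega^{rt}-(m-1)=S_r-m .
\]
So whenever $S_r=0$ you get $(\omega^{r}-1)\sum_{t=1}^{m-1}t\,\omega^{rt}=m\in U(R)$, i.e.\ $(\omega^{r}-1)^{-1}=m^{-1}\sum_{t=1}^{m-1}t\,\omega^{rt}$. This proves necessity, for every $0<r<m$, in an arbitrary commutative unital ring, and it gives an explicit inverse. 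Both directions then become pure identities. Your zero-divisor argument is shorter, but it genuinely depends on $R$ being finite.
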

\begin{proposition} \label{rd16}
The homomorphic image of a commutative halidon ring with index $m$ and primitive $m^{th}$ of unity $\omega$ is also a halidon ring with index $m$.
\end{proposition}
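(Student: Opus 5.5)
The plan is to verify the definition of a halidon ring directly, transporting every required identity through the homomorphism. Let $R$ be a commutative halidon ring with index $m$ and primitive $m^{th}$ root of unity $\omega$. Let $\varphi:R\to S$ be a surjective ring homomorphism onto a \emph{nonzero} ring $S$; the zero ring has to be excluded, and I will state this tacitly. Since $\varphi$ is onto, $\varphi(1)$ is the unity of $S$ and $S$ is commutative. The candidate primitive root in $S$ is $\varphi(\omega)$.

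First I would check the easy properties. We have $\varphi(\omega)^m=\varphi(\omega^m)=\varphi(1)=1$. Next, $m\cdot 1_S=\varphi(m\cdot 1_R)$, and if $u\in R$ is the inverse of $m$, then $\varphi(m)\varphi(u)=1$, so $m\in U(S)$. Applying the additive and multiplicative map $\varphi$ to the defining sums gives
$$\sum_{t=0}^{m-1}\varphi(\omega)^{rt}=\varphi\Big(\sum_{t=0}^{m-1}\omega^{rt}\Big)=\begin{cases} m, & r\equiv 0 \pmod m,\\ 0, & r\not\equiv 0 \pmod m.\end{cases}$$
So the orthogonality relations hold verbatim in $S$.

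The only point needing an argument, and the one I expect to be the main obstacle, is that $m$ is still the \emph{least} positive integer with $\varphi(\omega)^m=1$, since a homomorphism can collapse the order. I would argue by contradiction. Suppose $\varphi(\omega)^d=1$ for some $0<d<m$. Then taking $r=d$ in the relation above, every term equals $1$, so the left side is $m\cdot 1_S$, while the right side is $0$. Hence $m\cdot 1_S=0$. But $m$ is a unit in $S$, so $1=0$ in $S$, contradicting $S\neq 0$. Therefore the order of $\varphi(\omega)$ is exactly $m$.

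Combining these facts, $\varphi(\omega)$ is a primitive $m^{th}$ root of unity in $S$ and $m\in U(S)$, so $S$ is a halidon ring with index $m$. In the finite case the same conclusion can be cross-checked with Theorem \ref{rd2}, since $\varphi$ maps units to units: $\varphi(\omega)^d-1=\varphi(\omega^d-1)$ is a unit for every proper divisor $d$ of $m$.
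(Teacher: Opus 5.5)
Your proof is correct and complete. The paper states this proposition without proof, as one of the background facts on halidon rings taken from the cited earlier work, so there is no in-text argument to compare with. Your route is the natural direct verification of the definition: push $\omega$ and $m^{-1}$ forward along $\varphi$, and apply $\varphi$ to the orthogonality sums.

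Two points in your write-up go beyond a bare ``apply $\varphi$'' argument.

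First, the order of $\varphi(\omega)$ could a priori drop. Your observation settles this cleanly: if $\varphi(\omega)^{d}=1$ with $0<d<m$, then $\sum_{t=0}^{m-1}\varphi(\omega)^{dt}=0$ becomes $m\cdot 1_S=0$, which is impossible for a unit in a nonzero ring. It also shows that, in the paper's definition, the ``least positive integer'' clause already follows from the orthogonality relations together with $m\in U(R)$.

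Second, excluding the zero image is necessary, not a technicality. For $m>1$ the zero ring is a homomorphic image of $R$ in which the only element has order $1$, so the proposition as printed fails there.

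You never use commutativity, so the same argument also covers noncommutative halidon rings such as $\mathcal{H}_{\mathbb{Z}_{7}}$. Your finite-case cross-check via Theorem~\ref{rd2} is fine, with one implicit step made explicit: it first applies Theorem~\ref{rd2} in the finite ring $R$ to get $\omega^{d}-1\in U(R)$, and only then pushes these units forward.
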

\begin{proposition}\label{rd20}
Let R be a commutative halidon ring with index m and let k $>$ 1 be a divisor of m. Then R is also a halidon ring with index k.
\end{proposition}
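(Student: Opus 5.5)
Let $\omega$ be the given primitive $m^{th}$ root of unity in $R$, write $m=kl$ with $l=m/k$, and set $\eta=\omega^{l}$. I will show that $\eta$ is a primitive $k^{th}$ root of unity in the sense of the definition above and that $k\in U(R)$. Then $R$ is a halidon ring with index $k$ by definition.

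The invertibility of $k$ is immediate. Since $m\in U(R)$ and $m=kl$ in the commutative ring $R$, we have $k\cdot(l\,m^{-1})=1$, so $k\in U(R)$. (Likewise $l\in U(R)$, though this will not be needed.)

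Next I check that $k$ is the least positive integer with $\eta^{k}=1$. Clearly $\eta^{k}=\omega^{m}=1$. Suppose $\eta^{t}=1$ for some $0<t<k$. Then $\omega^{lt}=1$ with $0<lt<m$. This contradicts the minimality of $m$ as the order of $\omega$, which is part of the definition of a primitive $m^{th}$ root.

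The main step is the orthogonality relation: for $0<r\le k-1$ we need $\sum_{s=0}^{k-1}\eta^{rs}=0$. For $r=0$ the sum is trivially $k$. Fix $0<r<k$ and put $\alpha=\eta^{r}=\omega^{lr}$. Since $0<lr<m$, the hypothesis on $\omega$ gives
\[
0=\sum_{s=0}^{m-1}\omega^{lrs}=\sum_{s=0}^{m-1}\alpha^{s}.
\]
Because $\alpha^{k}=\omega^{rm}=1$, the powers $\alpha^{s}$ are periodic in $s$ with period $k$. Splitting the range $0\le s\le m-1$ into $l$ blocks of length $k$ gives
\[
0=l\sum_{s=0}^{k-1}\alpha^{s}.
\]
Multiplying by $l^{-1}$, which exists because $l$ divides the unit $m$, yields $\sum_{s=0}^{k-1}\eta^{rs}=0$, as required.

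The only delicate point is this cancellation of $l$. Without invertibility of $m$ the conclusion could fail, so I would stress that $l\mid m$ forces $l\in U(R)$. One could also appeal to Theorem \ref{rd2} in the finite case, using that $\eta^{d}-1=\omega^{ld}-1$ for $d\mid k$, $d<k$, is of the form $\omega^{d'}-1$ with $d'=ld$ a proper divisor of $m$, hence a unit. The direct argument above avoids the finiteness assumption, so I would present that one.
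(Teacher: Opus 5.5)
Your proof is correct and complete. Both $k$ and $l$ are units because they divide the unit $m$. The element $\eta=\omega^{l}$ has order exactly $k$ by the minimality of $m$. The block decomposition $\sum_{s=0}^{m-1}\alpha^{s}=l\sum_{s=0}^{k-1}\alpha^{s}$, followed by cancelling $l$, gives the orthogonality relations. The general $i,j$ form of the definition then follows because $\eta^{k}=1$.

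The paper itself does not prove this proposition. It is quoted, like Theorem \ref{rd2} and Proposition \ref{rd16}, from the author's earlier work on halidon rings, so there is no in-paper argument to compare against.

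Within the paper's own toolkit, the natural route is the one you sketch via Theorem \ref{rd2}. Apply it forward to $\omega$ to get $\omega^{ld}-1\in U(R)$ for $d\mid k$, $d<k$, then apply it backward to $\eta$. That route is shorter, but Theorem \ref{rd2} is stated only for finite commutative rings, whereas the proposition concerns arbitrary commutative halidon rings. Your direct computation covers that generality and shows exactly where invertibility is used.

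Your remark that the cancellation of $l$ is essential is accurate. Take the eight-element ring $\mathbb{F}_{2}[x]/\bigl((1+x)^{3}\bigr)$. There $x$ has order $4$ and satisfies all the sum conditions for $m=4$, since they reduce to $(1+x)^{3}=0$ and $2(1+x^{2})=0$. Yet $1+x^{2}=(1+x)^{2}\neq 0$, so $x^{2}$ fails the sum condition for $k=2$.
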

In the rest of the section, let $R=\mathbb{Z}_{n}$ be a halidon ring with index $m$ and primitive $m^{th}$ root of unity $\omega$.
\begin{lemma} \label{rd3}
Let $p$ be an odd prime number and $k$ a positive integer. Then
\begin{enumerate}
  \item $U(\mathbb{Z}_{p})=<\omega>$ for some $\omega \in U(\mathbb{Z}_{p})$ with order $p-1$,
  \item $U(\mathbb{Z}_{p^{k}})=<\omega>$ for the same $\omega$ treating as an element in  $U(\mathbb{Z}_{p^{k}})$ with order $\phi(p^{k})$.
\end{enumerate}
\end{lemma}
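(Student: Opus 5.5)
The plan is to prove the classical statement that $U(\mathbb{Z}_{p^{k}})$ is cyclic. I would read part 2 as ``$\omega$ may be chosen, as an integer representative of the generator in part 1, so that it also generates $U(\mathbb{Z}_{p^{k}})$.'' Taken literally (an arbitrary generator of $U(\mathbb{Z}_{p})$, with order $p-1$), the claim can fail: $10$ is a primitive root modulo $487$ but not modulo $487^{2}$. So the proof must include a choice of lift, replacing $\omega$ by $\omega+p$ when necessary. Reducing modulo $p$ does not change the class of $\omega$ in $\mathbb{Z}_{p}$, so part 1 is unaffected.

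\textbf{Part 1.} I would use the standard field argument. $\mathbb{Z}_{p}$ is a field, so for each $d \mid p-1$ the polynomial $x^{d}-1$ has at most $d$ roots. Let $\psi(d)$ be the number of elements of order $d$ in $U(\mathbb{Z}_{p})$. The elements of order $d$, if any exist, all lie in the cyclic subgroup generated by one of them, so $\psi(d)\le \phi(d)$. Since $\sum_{d\mid p-1}\psi(d)=p-1=\sum_{d\mid p-1}\phi(d)$, we get equality everywhere. In particular $\psi(p-1)=\phi(p-1)\geq 1$, which gives a generator $\omega$.

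\textbf{Part 2, choice of lift.} Take an integer $g$ representing $\omega$. If $g^{p-1}\not\equiv 1 \pmod{p^{2}}$, keep $g$. Otherwise replace $g$ by $g+p$. By the binomial theorem,
$$(g+p)^{p-1}\equiv g^{p-1}+(p-1)g^{p-2}p \pmod{p^{2}},$$
and $p\nmid (p-1)g^{p-2}$, so $(g+p)^{p-1}\not\equiv 1 \pmod{p^{2}}$. In both cases we may write $g^{p-1}=1+pt$ with $p\nmid t$.

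\textbf{Part 2, lifting the exponent.} I would show by induction on $j\ge 0$ that
$$g^{(p-1)p^{j}}=1+p^{j+1}t_{j}, \qquad p\nmid t_{j}.$$
For the inductive step, raise $1+p^{j+1}t_{j}$ to the $p$-th power. The binomial expansion gives $1+p^{j+2}t_{j}$ plus $\binom{p}{2}p^{2j+2}t_{j}^{2}$ plus higher terms. Since $p$ is odd, $p\mid\binom{p}{2}$, so every term after the linear one is divisible by $p^{j+3}$. This inductive step is the main obstacle: it is precisely where $p$ odd is used, and it fails for $p=2$ when $j=0$.

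\textbf{Part 2, conclusion.} Let $d$ be the order of $g$ in $U(\mathbb{Z}_{p^{k}})$. Then $d\mid \phi(p^{k})=(p-1)p^{k-1}$. Reducing modulo $p$ shows that $p-1\mid d$, because $g$ has order $p-1$ modulo $p$. If $d$ were a proper divisor of $(p-1)p^{k-1}$, then $d\mid (p-1)p^{k-2}$. But taking $j=k-2$ above shows that $g^{(p-1)p^{k-2}}=1+p^{k-1}t_{k-2}\not\equiv 1 \pmod{p^{k}}$, a contradiction. Hence $d=\phi(p^{k})$ and $U(\mathbb{Z}_{p^{k}})=\langle g\rangle$, where $g\equiv\omega \pmod p$ is the suitably chosen lift.
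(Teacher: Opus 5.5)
The paper states this lemma without proof; it is quoted as the classical theorem on primitive roots modulo $p^{k}$. So there is no argument of the paper's to compare yours with.

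Your proposal is the standard textbook proof, and it is correct and complete:
\begin{enumerate}
  \item the order-counting argument in the field $\mathbb{Z}_{p}$;
  \item the choice between $g$ and $g+p$ so that $g^{p-1}\not\equiv 1 \pmod{p^{2}}$;
  \item the lifting-the-exponent induction, where indeed only the $\binom{p}{2}$ term at $j=0$ needs $p$ odd, since all terms with $r\ge 3$ carry at least $p^{3(j+1)}$.
\end{enumerate}

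Your caveat about ``the same $\omega$'' is justified, and it is the one real point the paper glosses over. A fixed integer representative of a generator of $U(\mathbb{Z}_{p})$ need not generate $U(\mathbb{Z}_{p^{2}})$. Your example $10$ modulo $487$ is a genuine instance: $10$ has order $486$ modulo $487$, and $487$ is a base-$10$ Wieferich prime. So part 2 holds only with the lift chosen as you do. Your single lift $g$ works for all $k$ at once, which matches the lemma's ``same $\omega$''.

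This imprecision does not affect how the paper uses the lemma in Proposition \ref{rd17}. Let $g$ be any integer that is a primitive root modulo $p$. Then $g^{p^{k-1}}\equiv g \pmod{p}$ and $\bigl(g^{p^{k-1}}\bigr)^{p-1}=g^{\phi(p^{k})}\equiv 1 \pmod{p^{k}}$. Hence $g^{p^{k-1}}$ has order exactly $p-1$ in $U(\mathbb{Z}_{p^{k}})$, whichever lift is taken.

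One cosmetic point: rename your counting function $\psi(d)$, since $\psi$ is already the paper's halidon function.
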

\begin{definition}
Let R be a ring and $\alpha \in R$. The element $\alpha$ is called a \textit{primitive element} or \textit{primitive root} if $\alpha$ multiplicatively generates the unit group $U(R)$
 of the ring R.
\end{definition}
\begin{example}
$U(\mathbb{Z}_{5})=<2>$ with order 4  and $U(\mathbb{Z}_{5^{3}})=<2>$ with order $\phi(125)=100$. Clearly, $2 \in \mathbb{Z}_{5^{3}}$ is a primitive root but not a primitive root of unity in $\mathbb{Z}_{5^{3}}$.
\end{example}
\begin{proposition}  \label{rd17}
Let $p$ be an odd prime number. Then $\mathbb{Z}_{p^{k}}$ is a halidon ring with index $m=p-1$ and  $\omega_{1}=\omega^{p^{k-1}}$ is a primitive $m^{th}$ root of unity for positive integers $k\geq 1$.
\end{proposition}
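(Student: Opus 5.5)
We use Theorem \ref{rd2}: since $\mathbb{Z}_{p^{k}}$ is a finite commutative ring, it is enough to show three things. First, $\omega_{1}=\omega^{p^{k-1}}$ has multiplicative order exactly $m=p-1$. Second, $m=p-1$ is a unit in $\mathbb{Z}_{p^{k}}$. Third, $\omega_{1}^{d}-1\in U(\mathbb{Z}_{p^{k}})$ for every proper divisor $d$ of $m$. The sum conditions in the definition of a primitive $m^{th}$ root are then supplied by Theorem \ref{rd2} itself. A direct check is also easy: the identity $(\omega_{1}^{r}-1)(1+\omega_{1}^{r}+\cdots+\omega_{1}^{r(m-1)})=\omega_{1}^{rm}-1=0$, together with $\omega_{1}^{r}-1$ being a unit, forces the geometric sum to vanish for $0<r<m$.

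For the order, take $\omega$ as in Lemma \ref{rd3}, a generator of the cyclic group $U(\mathbb{Z}_{p^{k}})$ of order $\phi(p^{k})=p^{k-1}(p-1)$. The element $\omega^{p^{k-1}}$ then has order $\phi(p^{k})/\gcd(\phi(p^{k}),p^{k-1})=p-1$. So $m=p-1$ is the least positive integer with $\omega_{1}^{m}=1$. The unit condition on $m$ is immediate: $\gcd(p-1,p)=1$, so $p-1$ is coprime to $p^{k}$ and hence invertible.

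The main step is showing that $\omega_{1}^{d}-1$ is a unit for $d\mid m$, $d<m$. An element of $\mathbb{Z}_{p^{k}}$ is a unit if and only if it is nonzero mod $p$, so we reduce modulo $p$. By Fermat, $\omega^{p}\equiv\omega \pmod p$, hence $\omega_{1}=\omega^{p^{k-1}}\equiv\omega \pmod p$. By Lemma \ref{rd3}(1), $\omega$ has order exactly $p-1$ in $U(\mathbb{Z}_{p})$. Therefore $\omega_{1}^{d}\equiv\omega^{d}\not\equiv 1 \pmod p$ for $0<d<p-1$, so $p\nmid \omega_{1}^{d}-1$, and $\omega_{1}^{d}-1\in U(\mathbb{Z}_{p^{k}})$.

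One point needs care. Lemma \ref{rd3}(2) asserts that the same $\omega$ generates $U(\mathbb{Z}_{p^{k}})$, but that is not automatic; the standard fix replaces $\omega$ by $\omega+p$ when necessary. Our argument only uses that $\omega$ generates $U(\mathbb{Z}_{p})$ and that we then work with its image $\omega_{1}$. The order computation survives even if $\omega$ does not generate mod $p^{k}$. Indeed, $\omega_{1}^{p-1}=\omega^{\phi(p^{k})}=1$, so the order of $\omega_{1}$ divides $p-1$. Conversely, the mod-$p$ reduction $\omega_{1}\equiv\omega$ shows the order is at least $p-1$. I would therefore present the order argument this way, which avoids relying on the exact form of Lemma \ref{rd3}(2). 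The case $k=1$ is included, with $\omega_{1}=\omega$.
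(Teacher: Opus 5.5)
The paper gives no proof of Proposition~\ref{rd17}; it is quoted from the author's earlier work on halidon rings, so there is no in-paper argument to compare with. Your proof is correct and complete.

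The route the surrounding text suggests is essentially your first order computation: take $\omega$ to generate $U(\mathbb{Z}_{p^{k}})$ by Lemma~\ref{rd3}(2), read off that $\omega^{p^{k-1}}$ has order $p-1$, and invoke Theorem~\ref{rd2}. Your suspicion about Lemma~\ref{rd3}(2) is well founded, because it is false as stated. For instance, $14$ generates $U(\mathbb{Z}_{29})$, but modulo $841$ we have $14^{7}\equiv 41$ and $14^{14}\equiv 41^{2}=1681\equiv -1$. Hence $14^{28}\equiv 1 \pmod{29^{2}}$, and $14$ does not generate $U(\mathbb{Z}_{29^{2}})$.

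Your second order argument avoids this problem. Euler's theorem gives $\omega_{1}^{p-1}=\omega^{\phi(p^{k})}=1$. Reduction mod $p$, with $\omega_{1}\equiv\omega$, shows the order is at least $p-1$. This needs only that $\omega$ is an integer lift of a generator of $U(\mathbb{Z}_{p})$, so it is the version to present, and it also shows the proposition holds whichever lift is chosen.

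You show $\omega_{1}^{r}-1\in U(\mathbb{Z}_{p^{k}})$ for every $0<r<p-1$, not just for proper divisors of $m$. So your geometric-sum identity checks the defining sum conditions directly, and you can drop the appeal to Theorem~\ref{rd2} altogether. That is just as well, since the theorem's hypothesis, as written in the paper, already assumes a primitive $m^{th}$ root of unity, which makes it circular for this purpose.
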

The complete characterisation of the halidon property in $\mathbb{Z}_{n}$, where $n$ is odd, is given by the following theorem.
\begin{theorem}\label{rd18}
The ring $\mathbb{Z}_{n}$, where $ \displaystyle n=p_{1}^{e_{1}}p_{2}^{e_{2}}p_{3}^{e_{3}}.....p_{k}^{e_{k}}$ with $2<p_{1}<p_{2}<.....<p_{k}$ is a halidon ring with index $m$ and the primitive $m^{th}$ root of unity $\omega$ if and only if each $ \displaystyle \mathbb{Z}_{p_{i}^{e_{i}}}$ is a halidon ring with index $m$ and primitive $m^{th}$ root of unity $ \displaystyle \omega_{i}=\omega \ mod \ {p_{i}^{e_{i}}} $ for each $i=1,2,3,...,k$.
\end{theorem}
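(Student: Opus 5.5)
We prove Theorem \ref{rd18} by combining the Chinese Remainder Theorem with Proposition \ref{rd16} for one direction and with the criterion of Theorem \ref{rd2} for the other. Write $n_i=p_i^{e_i}$. The CRT gives a ring isomorphism
$$\psi:\mathbb{Z}_{n}\to \mathbb{Z}_{n_1}\times\cdots\times\mathbb{Z}_{n_k},\qquad a\mapsto (a \ mod \ n_1,\dots,a \ mod \ n_k).$$
An element of the product is a unit if and only if every coordinate is a unit. Every ring involved is finite and commutative, so Theorem \ref{rd2} applies to each of them.

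\emph{Necessity.} Suppose $\mathbb{Z}_{n}$ is halidon with index $m$ and primitive root $\omega$. The reduction $\mathbb{Z}_{n}\to\mathbb{Z}_{n_i}$ is a surjective ring homomorphism. By Proposition \ref{rd16}, each $\mathbb{Z}_{n_i}$ is halidon with index $m$, and the image $\omega_i=\omega \ mod \ n_i$ is a primitive $m^{th}$ root of unity.

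It is also easy to check this directly with Theorem \ref{rd2}. Images of units are units, so $m$ and $\omega_i^{d}-1$ are units for every proper divisor $d$ of $m$. We still need $m$ to be the least positive integer with $\omega_i^m=1$. Suppose instead $\omega_i^{d'}=1$ with $d'<m$. Then $\omega_i^{g}=1$ for $g=\gcd(d',m)$, which is a proper divisor of $m$. This makes $\omega_i^{g}-1=0$, contradicting that it is a unit (the ring $\mathbb{Z}_{n_i}$ is nonzero).

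\emph{Sufficiency.} Suppose each $\mathbb{Z}_{n_i}$ is halidon with index $m$ and primitive root $\omega_i=\omega \ mod \ n_i$. Then $\psi(\omega)=(\omega_1,\dots,\omega_k)$. Since $\omega_i^m=1$ for every $i$, we get $\omega^m=1$ in $\mathbb{Z}_{n}$.

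For every proper divisor $d$ of $m$, each coordinate $\omega_i^{d}-1$ is a unit by Theorem \ref{rd2}. Hence $\omega^d-1$ is a unit in $\mathbb{Z}_{n}$. The same argument as above then gives minimality of $m$: if $\omega^{d'}=1$ with $d'<m$, then $\omega^{g}-1=0$ for $g=\gcd(d',m)$, which is impossible.

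Finally, $m$ is a unit in $\mathbb{Z}_{n}$, because it is a unit modulo every $p_i^{e_i}$, that is, $\gcd(m,p_i)=1$ for all $i$. By Theorem \ref{rd2}, $\mathbb{Z}_{n}$ is halidon with index $m$ and primitive root $\omega$.

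The main obstacle is a subtle point in using Theorem \ref{rd2}. Its hypothesis asks for a "primitive $m^{th}$ root of unity", which includes the orthogonality sums. To avoid circularity, we apply only the unit conditions together with minimality of order, and read Theorem \ref{rd2} as saying these suffice. If that reading is in doubt, we verify the sums directly instead. For $0<r<m$, each coordinate of $\psi\bigl(\sum_{t=0}^{m-1}\omega^{rt}\bigr)$ equals $\sum_{t}\omega_i^{rt}=0$, because $\omega_i$ is primitive in $\mathbb{Z}_{n_i}$. Hence the sum is $0$ in $\mathbb{Z}_{n}$. For $r=0$ the sum is $m$. This coordinatewise check settles both directions cleanly.
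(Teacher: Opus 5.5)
The paper does not prove Theorem~\ref{rd18}. Like Theorem~\ref{rd2} and Proposition~\ref{rd16}, it is quoted from the author's earlier work on halidon rings, so there is no in-paper argument to compare against. Your Chinese Remainder Theorem argument is correct and is the natural proof. Every defining condition ($\omega^{m}=1$, $\sum_{t=0}^{m-1}\omega^{rt}=0$ for $0<r<m$, and $m\in U(\mathbb{Z}_{n})$) is an identity or a unit condition. Each one holds in $\mathbb{Z}_{n}$ exactly when it holds in every factor $\mathbb{Z}_{p_{i}^{e_{i}}}$.

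The one point worth tightening is your use of Theorem~\ref{rd2}. As stated, it only guarantees that \emph{some} primitive root has $\omega^{d}-1$ invertible, not the particular $\omega$ (or $\omega_{i}$) you are given. Your stronger reading is in fact true. The matrix $V=(\omega^{ij})$ satisfies $V\bar{V}=mI$ with $\bar{V}=(\omega^{-ij})$, so the Vandermonde determinant $\prod_{i<j}(\omega^{j}-\omega^{i})$ is a unit, and hence so is each factor.

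You can bypass the issue entirely. Your coordinatewise check already handles the sums, and minimality of the order needs only one line in each direction:
\begin{itemize}
\item \emph{Sufficiency.} $\omega^{d'}=1$ in $\mathbb{Z}_{n}$ forces $\omega_{1}^{d'}=1$, so $d'\geq m$.
\item \emph{Necessity.} Suppose $\omega_{i}^{d'}=1$ with $0<d'<m$. Reducing $\sum_{t=0}^{m-1}\omega^{d't}=0$ modulo $p_{i}^{e_{i}}$ gives $m\equiv 0$. This contradicts $m$ being a unit in the nonzero ring $\mathbb{Z}_{p_{i}^{e_{i}}}$.
\end{itemize}
With these two lines, the proof uses only the definition and the CRT, and needs neither Proposition~\ref{rd16} nor Theorem~\ref{rd2}.
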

\begin{definition} \label{rd7}
Let $p_{1},p_{2},p_{3},....,p_{k}$ be odd primes and let $\phi(x)$ be the Euler's totient function. We define the \textit{halidon function}  $$\psi(n)= \begin{cases} gcd \{ \phi(p_{1}^{e_{1}}), \phi(p_{2}^{e_{2}}),\phi(p_{3}^{e_{3}}),...., \phi(p_{k}^{e_{k}})\}, & n=p_{1}^{e_{1}}p_{2}^{e_{2}}p_{3}^{e_{3}}.....p_{k}^{e_{k}} \\
1, & n \ \text{is even}\end{cases} $$
\end{definition}
\begin{proposition}
Let $n$ be as in definition \ref{rd7}. Then the halidon function $$\psi(n)=gcd\{ p_{1}-1,p_{2}-1,p_{3}-1,....,p_{k}-1\},$$ which is independent of the exponents $e_{1},e_{2},e_{3},....,e_{k}$.
\end{proposition}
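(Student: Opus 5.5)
The identity cannot be meant with $\psi(n)$ read as the literal gcd of the numbers $\phi(p_i^{e_i})=p_i^{e_i-1}(p_i-1)$. For $n=9$ that gcd is $\phi(9)=6$, while $p-1=2$. I will therefore read $\psi(n)$ as the quantity the definition is designed to capture: the largest index $m$ for which $\mathbb{Z}_{n}$ is a halidon ring. The plan is to show that this maximal index equals $\gcd\{p_1-1,\dots,p_k-1\}$. Independence from the $e_i$ then follows at once. The mechanism is that the factor $p_i^{e_i-1}$ of $\phi(p_i^{e_i})$ can never contribute to a halidon index, because the index has to be invertible in $\mathbb{Z}_{p_i^{e_i}}$.

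\emph{Step 1 (prime powers).} I would show that for an odd prime $p$, $\mathbb{Z}_{p^{e}}$ is a halidon ring with index $m$ if and only if $m\mid p-1$.
\begin{itemize}
\item[(a)] Necessity. By Lemma \ref{rd3}, $U(\mathbb{Z}_{p^e})$ is cyclic of order $p^{e-1}(p-1)$. A primitive $m^{th}$ root of unity has multiplicative order exactly $m$, so $m\mid p^{e-1}(p-1)$. The definition of a halidon ring also requires $m\in U(\mathbb{Z}_{p^e})$, i.e.\ $p\nmid m$. Together these give $m\mid p-1$.
\item[(b)] Sufficiency. Proposition \ref{rd17} says $\mathbb{Z}_{p^e}$ is halidon with index $p-1$. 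Proposition \ref{rd20} then gives the halidon property for every divisor $m>1$ of $p-1$; the case $m=1$ is trivial.
\end{itemize}

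\emph{Step 2 (general odd $n$).} Write $n=p_1^{e_1}\cdots p_k^{e_k}$. I would apply Theorem \ref{rd18}: $\mathbb{Z}_n$ is halidon with index $m$ if and only if each $\mathbb{Z}_{p_i^{e_i}}$ is halidon with index $m$, with roots $\omega_i=\omega \bmod p_i^{e_i}$.
\begin{itemize}
\item[(a)] The forward direction and Step 1 give $m\mid p_i-1$ for every $i$, hence $m\mid g:=\gcd\{p_1-1,\dots,p_k-1\}$.
\item[(b)] For the converse, assume $m\mid g$. Step 1 supplies a primitive $m^{th}$ root $\omega_i$ in each $\mathbb{Z}_{p_i^{e_i}}$. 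The Chinese Remainder Theorem glues the $\omega_i$ into an $\omega\in\mathbb{Z}_n$ with $\omega\equiv\omega_i \pmod{p_i^{e_i}}$, so Theorem \ref{rd18} applies.
\end{itemize}
Hence the set of admissible indices is exactly the set of divisors of $g$, its maximum is $g$, and $g$ does not involve the $e_i$. In the even case, $m$ must be odd to be a unit, but $-1\ne 1$ forces the unit group to contain elements of even order only via $-1$; more directly, the definition sets $\psi(n)=1$, so there is nothing to prove there.

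\emph{Main obstacle.} I expect the delicate point to be Step 1(b), if one does not simply quote Propositions \ref{rd17} and \ref{rd20}. One must verify the condition of Theorem \ref{rd2}: for a proper divisor $d$ of $m$, $\omega^{d}-1$ must be a unit in $\mathbb{Z}_{p^e}$, not merely nonzero. The approach I would take is to reduce mod $p$. The subgroup of $U(\mathbb{Z}_{p^e})$ of order $p-1$ meets the kernel of reduction $U(\mathbb{Z}_{p^e})\to U(\mathbb{Z}_p)$ trivially, since that kernel has order $p^{e-1}$ and the orders are coprime. So $\omega^d\not\equiv 1 \pmod p$, which makes $\omega^d-1$ a unit. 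Checking invertibility of $m$ is immediate because $p\nmid m$.
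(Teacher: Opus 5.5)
The paper gives no proof of this proposition. It is stated without argument among the results on halidon rings quoted from earlier work. Your objection to the literal statement is correct, and the failure is not confined to $k=1$. For $n=63=3^{2}\cdot 7$, the printed Definition \ref{rd7} gives $\psi(63)=\gcd\{6,6\}=6$, whereas $\gcd\{2,6\}=2$. With the printed definition, Theorem \ref{rd1} would fail for the same $n$, since $6$ is not invertible in $\mathbb{Z}_{63}$. So the statement as written cannot be proved, and repairing it is the right move.

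Your proof of the repaired claim is sound:
\begin{itemize}
\item Step 1(a) needs only Lagrange's theorem together with $p\nmid m$.
\item Steps 1(b) and 2 use Propositions \ref{rd17}, \ref{rd20} and Theorem \ref{rd18} correctly, with the Chinese Remainder Theorem supplying the glued root.
\item Your reduction-mod-$p$ argument is a valid self-contained check of the unit condition in Theorem \ref{rd2}. The subgroup of order $p-1$ does meet the kernel of $U(\mathbb{Z}_{p^e})\to U(\mathbb{Z}_p)$ trivially.
\end{itemize}

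Your route proves more than the proposition seems to intend. It is in effect a proof of Theorem \ref{rd1} for odd $n$, and it identifies the admissible indices as exactly the divisors of $g=\gcd\{p_1-1,\dots,p_k-1\}$. If you want a repair that keeps $\psi$ as a gcd of totients, a purely arithmetic one is available. Let $G=\gcd\{\phi(p_1^{e_1}),\dots,\phi(p_k^{e_k})\}$; then $g$ is precisely the largest divisor of $G$ that is coprime to $n$. For a prime $q\nmid n$ one has $v_q(p_i^{e_i-1}(p_i-1))=v_q(p_i-1)$, so $v_q(G)=v_q(g)$. Also $p_j\nmid g$, because $g\mid p_j-1$. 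Together with your Step 1(a), this is exactly why the exponents cannot matter.

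One blemish: your sentence on even $n$ is garbled. Under your own reading of $\psi$ as the maximal index, appealing to the definition is not an argument. The correct one-line argument is parity. For even $n$, an index $m$ must be odd and $\omega\equiv 1\pmod 2$, so $\sum_{r=0}^{m-1}\omega^{r}\equiv m\equiv 1\pmod 2$. Hence for $m>1$ the required identity $\sum_{r=0}^{m-1}\omega^{r}=0$ fails in $\mathbb{Z}_n$, and the maximal index is $1$. Since the proposition concerns odd $n$ only, this does not affect your argument.
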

Now we state the following theorem, which was previously a conjecture(see \cite{ath2}):
 \begin{theorem}\label{rd1}(A. Telveenus \cite{ath2})
If $R=\mathbb{Z}_{n}$ and $n=p_{1}^{e_{1}}p_{2}^{e_{2}}p_{3}^{e_{3}}.....p_{k}^{e_{k}}$ with primes $p_{1}<p_{2}<p_{3}<....<p_{k}$ including 2, then $R$ is a halidon ring with maximum index $m_{max}=\psi(n)$.
\end{theorem}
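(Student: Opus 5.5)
We must show that $\mathbb{Z}_{n}$ is a halidon ring with maximum index $\psi(n)$. When $n$ is even, $\psi(n)=1$ by Definition \ref{rd7}, and the claim is that no index $m>1$ occurs. When $n$ is odd, the claim is that $\psi(n)=\gcd\{p_{1}-1,\dots,p_{k}-1\}$ is attained and is the largest possible index. The plan is to prove an upper bound and a matching construction, using Theorem \ref{rd2} and Theorem \ref{rd18} to reduce to prime powers.

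\emph{Upper bound.} Suppose $\mathbb{Z}_{n}$ is halidon with index $m>1$ and primitive root $\omega$.

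First let $n$ be even. The projection $\mathbb{Z}_{n}\to\mathbb{Z}_{2}$ is a surjective homomorphism, so by Proposition \ref{rd16} $\mathbb{Z}_{2}$ is halidon with index $m$. Then $m$ must be a unit in $\mathbb{Z}_{2}$, so $m$ is odd. Also $\omega$ maps to $1$, so $\omega^{1}-1$ maps to $0$. Since $m>1$, the divisor $d=1<m$ is admissible, and Theorem \ref{rd2} requires $\omega-1\in U$. This is a contradiction, so $m_{max}=1=\psi(n)$.

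Now let $n$ be odd. Apply Proposition \ref{rd16} to each projection $\mathbb{Z}_{n}\to\mathbb{Z}_{p_{i}}$: each $\mathbb{Z}_{p_{i}}$ is halidon with index $m$. In particular, the image $\bar\omega$ satisfies $\bar\omega^{m}=1$. Moreover $\bar\omega^{d}\ne 1$ for every proper divisor $d$ of $m$, because $\bar\omega^{d}-1$ is a unit by Theorem \ref{rd2}. Hence $\bar\omega$ has order exactly $m$ in the cyclic group $U(\mathbb{Z}_{p_{i}})$, which has order $p_{i}-1$. It follows that $m\mid p_{i}-1$ for every $i$, so $m\mid\psi(n)$ and $m\le \psi(n)$.

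\emph{Construction.} Let $m=\psi(n)$, so $m\mid p_{i}-1$ for each $i$. By Lemma \ref{rd3} and Proposition \ref{rd17}, each $\mathbb{Z}_{p_{i}^{e_{i}}}$ is halidon with index $p_{i}-1$, with primitive root $\omega_{i}'=\omega^{p_{i}^{e_{i}-1}}$ where $\omega$ is a generator. By Proposition \ref{rd20} it is then halidon with index $m$. Explicitly, take $\omega_{i}=(\omega_{i}')^{(p_{i}-1)/m}$; it has order $m$ in $\mathbb{Z}_{p_{i}^{e_{i}}}$. To check the unit condition, note that for a proper divisor $d$ of $m$, $\omega_{i}^{d}\not\equiv 1 \pmod{p_{i}}$. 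This holds because reduction mod $p_{i}$ is injective on the Teichmüller subgroup of order $p_{i}-1$. Hence $\omega_{i}^{d}-1$ is a unit in $\mathbb{Z}_{p_{i}^{e_{i}}}$. Also $m$ is a unit there, since $m<p_{i}$. By the Chinese Remainder Theorem, choose $\omega\in\mathbb{Z}_{n}$ with $\omega\equiv\omega_{i}\pmod{p_{i}^{e_{i}}}$ for each $i$. Theorem \ref{rd18} then gives that $\mathbb{Z}_{n}$ is halidon with index $m=\psi(n)$. Combined with the upper bound, $m_{max}=\psi(n)$.

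\emph{Main obstacle.} The delicate point is the unit condition for $\omega_{i}^{d}-1$ in the prime-power case. The key is that the elements of order dividing $p_{i}-1$ in $U(\mathbb{Z}_{p_{i}^{e_{i}}})$ reduce injectively mod $p_{i}$. Everything else is bookkeeping with Theorem \ref{rd2} and the CRT.
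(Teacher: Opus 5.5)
Your proof is correct. The paper gives no argument for Theorem \ref{rd1}; it only quotes the result from \cite{ath2}, so there is no in-text proof to compare against. What you have written is a complete proof built from the tools the paper itself lists.

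\textbf{Upper bound.} You show that the image of $\omega$ in $\mathbb{Z}_{p_i}$ has order exactly $m$, since $\bar\omega^{d}-1$ must be a unit for every proper divisor $d$ of $m$. This gives $m\mid p_i-1$ for every $i$, hence $m\mid\psi(n)$. That is stronger than $m\le\psi(n)$. Combined with Proposition \ref{rd20}, it shows that for odd $n$ the possible indices of $\mathbb{Z}_n$ are exactly the divisors of $\psi(n)$.

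\textbf{Construction.} Taking order-$m$ powers of the Teichm\"uller lifts $\omega^{p_i^{e_i-1}}$ and gluing them by the Chinese Remainder Theorem and Theorem \ref{rd18} is sound. Proposition \ref{rd20} already supplies a primitive $m$th root in each $\mathbb{Z}_{p_i^{e_i}}$. So your injectivity check on the subgroup of order $p_i-1$ is not logically required; it simply makes the argument self-contained.

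\textbf{Two minor points.}
\begin{enumerate}
  \item For even $n$ you do not need Proposition \ref{rd16}. Every unit $\omega$ of $\mathbb{Z}_n$ is odd, so $\omega-1$ is even and is never a unit.
  \item You should state explicitly that index $1$ is attained trivially with $\omega=1$. That is exactly what the claim $m_{max}=\psi(n)=1$ asserts for even $n$.
\end{enumerate}
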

From proposition \ref{rd17}, we have $\mathbb{Z}_{p^{k}}$ is a halidon ring with index $m=p-1$ and the primitive $m^{th}$ root of unity $\omega_{1}=\omega^{p^{k-1}}$ for positive integer $k\geq 1$ where $\omega$ is a primitive $m^{th}$ root of unity in $\mathbb{Z}_{p}$.
\begin{lemma} \label{rd11}
Let $p$ be a prime number. Then the number of primitive $k^{th}$ roots of unity in $\mathbb{Z}_{p}$ is $\phi(k)$.
\end{lemma}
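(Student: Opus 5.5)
The plan is to reduce the halidon-style definition of a primitive $k^{th}$ root of unity to the ordinary group-theoretic notion of an element of multiplicative order exactly $k$ in $U(\mathbb{Z}_{p})$. Then I count such elements using the cyclicity of $U(\mathbb{Z}_{p})$ from Lemma \ref{rd3}. Throughout I read the statement as tacitly assuming $k \mid p-1$, which is the situation in which $\mathbb{Z}_{p}$ is a halidon ring with index $k$, by Proposition \ref{rd20} applied to the index $p-1$ of Proposition \ref{rd17}. For $k \nmid p-1$ there is no element of order $k$, so the count is $0$, and I will note this explicitly.

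\textbf{Step 1: order $k$ is equivalent to being a primitive $k^{th}$ root.} By definition, a primitive $k^{th}$ root $\omega$ has $k$ as the least positive integer with $\omega^{k}=1$. Hence every primitive $k^{th}$ root has order exactly $k$ in $U(\mathbb{Z}_{p})$.

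For the converse, let $\omega$ have order $k$ and fix $0<r\le k-1$. Then $\omega^{r}\neq 1$ while $(\omega^{r})^{k}=1$, so
$$(\omega^{r}-1)\bigl(1+\omega^{r}+\cdots+(\omega^{r})^{k-1}\bigr)=(\omega^{r})^{k}-1=0 .$$
Since $\mathbb{Z}_{p}$ is a field, $\omega^{r}-1$ is invertible, and therefore the sum is $0$. For $r=0$ the sum is trivially $k$. So $\omega$ satisfies the orthogonality relations in the definition.

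Equivalently, one can invoke Theorem \ref{rd2}. For every proper divisor $d$ of $k$, the element $\omega^{d}-1$ is nonzero, hence a unit in the field $\mathbb{Z}_{p}$. Also $k$ is a unit because $k\le p-1<p$.

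\textbf{Step 2: count elements of order $k$.} By Lemma \ref{rd3}, $U(\mathbb{Z}_{p})=\langle g\rangle$ is cyclic of order $p-1$. For $k\mid p-1$, the elements of order $k$ are exactly $g^{\frac{p-1}{k}t}$ with $1\le t\le k$ and $\gcd(t,k)=1$. These are pairwise distinct, so there are exactly $\phi(k)$ of them. Combining this with Step 1 gives the lemma.

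The only point needing care is Step 1. One must make sure that the extra summation conditions in the ring-theoretic definition impose nothing beyond order $k$. This is exactly where the absence of nonzero zero divisors in $\mathbb{Z}_{p}$ is used, and it is the reason the statement is made for $\mathbb{Z}_{p}$ rather than for $\mathbb{Z}_{n}$ in general.
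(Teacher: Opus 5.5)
The paper states this lemma without proof; like the other halidon-ring results in that section, it is quoted from the author's earlier work. So there is no argument in the paper to compare yours with.

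Your proof is correct and is the natural one. In the field $\mathbb{Z}_p$, the identity $(\omega^r-1)\sum_{s=0}^{k-1}\omega^{rs}=\omega^{rk}-1=0$, together with $\omega^r\neq 1$ for $0<r<k$, shows that the summation conditions in the definition impose nothing beyond multiplicative order exactly $k$. The count then reduces to counting generators of the unique subgroup of order $k$ in a cyclic group, which gives $\phi(k)$.

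Your restriction to $k\mid p-1$ is necessary, not just a convenient reading. For $p=5$ and $k=3$ there are no primitive cube roots of unity in $\mathbb{Z}_5$, although $\phi(3)=2$. It is also the only case in which the lemma is used: Theorem~\ref{rd19} assumes $p_i=mt_i+1$.

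There are two small points to fix.
\begin{enumerate}
\item Lemma~\ref{rd3} is stated only for odd $p$. For $p=2$, either cite the cyclicity of the multiplicative group of any finite field, or note directly that $U(\mathbb{Z}_2)$ is trivial. Then only $k=1$ occurs, and the count is $1=\phi(1)$.
\item Drop the \emph{equivalently} appeal to Theorem~\ref{rd2}. That theorem characterises when the ring is a halidon ring, and its hypothesis already presupposes a primitive root. So it does not show that an arbitrary element of order $k$ is a primitive $k$th root. Also, the invertibility of $k$ plays no role in the definition of a primitive root; it matters only for the halidon property of the ring. Your direct geometric-sum argument is the right justification for Step 1 and suffices on its own.
\end{enumerate}
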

\begin{theorem} \label{rd19}
Let $n=p_{1}^{e_{1}}p_{2}^{e_{2}}p_{3}^{e_{3}}.....p_{k}^{e_{k}}$ such that $e_{i}>0$ are integers and $p_{i}=mt_{i}+1;$ where $m$ is the maximum index of the halidon ring $\mathbb{Z}_{n}$ and $t_{i}$'s are relatively prime for all $i=1,2,3...,k$. Then the number of primitive $m^{th}$ root of unity in $\mathbb{Z}_{n}$ is $[\phi(m)]^{k}$.
\end{theorem}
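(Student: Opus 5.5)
The plan is to reduce to prime powers using the Chinese Remainder Theorem and then to count in each factor with Lemma \ref{rd11}. Write $\mathbb{Z}_{n}\cong \prod_{i=1}^{k}\mathbb{Z}_{p_{i}^{e_{i}}}$. By Theorem \ref{rd18}, $\omega\in\mathbb{Z}_{n}$ is a primitive $m^{th}$ root of unity if and only if each component $\omega_{i}=\omega \bmod p_{i}^{e_{i}}$ is a primitive $m^{th}$ root of unity in $\mathbb{Z}_{p_{i}^{e_{i}}}$. Since $\omega\mapsto(\omega_{1},\dots,\omega_{k})$ is a bijection, the number of primitive $m^{th}$ roots of unity in $\mathbb{Z}_{n}$ is the product over $i$ of the number $N_{i}$ of primitive $m^{th}$ roots of unity in $\mathbb{Z}_{p_{i}^{e_{i}}}$. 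It therefore suffices to prove $N_{i}=\phi(m)$ for each $i$.

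First I check that $m$ is admissible in each factor. Since $p_{i}=mt_{i}+1$, we have $m\mid p_{i}-1$, so $m$ is a unit modulo $p_{i}$ and hence modulo $p_{i}^{e_{i}}$. By Lemma \ref{rd3}, $U(\mathbb{Z}_{p_{i}^{e_{i}}})$ is cyclic of order $p_{i}^{e_{i}-1}(p_{i}-1)$. Because $\gcd(m,p_{i})=1$, it contains exactly $\phi(m)$ elements of exact multiplicative order $m$. Every such element lies in the unique subgroup of order $p_{i}-1$, the Teichm\"uller lift of $U(\mathbb{Z}_{p_{i}})$. Reduction modulo $p_{i}$ maps this subgroup isomorphically onto $U(\mathbb{Z}_{p_{i}})$ and preserves order, so these elements correspond bijectively to the elements of order $m$ in $\mathbb{Z}_{p_{i}}$. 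Those are exactly the primitive $m^{th}$ roots of unity counted in Lemma \ref{rd11}, of which there are $\phi(m)$.

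The main obstacle is checking that an element $\omega_{i}$ of exact order $m$ in $\mathbb{Z}_{p_{i}^{e_{i}}}$ satisfies the halidon orthogonality condition and not just $\omega_{i}^{m}=1$. I would use Theorem \ref{rd2}. We need $\omega_{i}^{d}-1\in U(\mathbb{Z}_{p_{i}^{e_{i}}})$ for every proper divisor $d$ of $m$, which amounts to $\omega_{i}^{d}\not\equiv 1 \pmod{p_{i}}$. This holds because the reduction of $\omega_{i}$ modulo $p_{i}$ has exact order $m$, by the isomorphism of the previous paragraph. Conversely, a primitive $m^{th}$ root of unity in the halidon sense has $m$ as the least exponent with $\omega_{i}^{m}=1$, so it has exact order $m$. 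Hence $N_{i}=\phi(m)$ exactly.

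Multiplying over the $k$ prime powers gives $[\phi(m)]^{k}$. The hypothesis that the $t_{i}$ are relatively prime enters only through Theorem \ref{rd1}: it guarantees that $m=\gcd\{p_{i}-1\}$ is the maximum index, so the $m$ used above is the one in the statement. The counting argument itself needs only $m\mid p_{i}-1$ for every $i$.
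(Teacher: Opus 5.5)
The paper gives no proof of Theorem~\ref{rd19}. It is quoted from earlier work, and the supporting facts it rests on (Lemma~\ref{rd11}, Proposition~\ref{rd17}, Theorem~\ref{rd18}) are also stated without proof, so there is no in-paper argument to compare against.

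Your proof is correct, and it is the natural way to assemble those results. Theorem~\ref{rd18} reduces the count to the prime-power factors. Your subgroup of order $p_i-1$ is exactly the image of $x\mapsto x^{p_i^{e_i-1}}$, which is the lift used in Proposition~\ref{rd17}. So each factor contributes the $\phi(m)$ roots of Lemma~\ref{rd11}.

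Two small repairs are worth making.

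First, Theorem~\ref{rd2} as stated presupposes that $\omega$ is already a primitive root. It therefore does not literally give you the implication ``exact order $m$ plus $\omega^{d}-1$ a unit implies primitive.'' Argue directly instead:
\begin{itemize}
\item Since $\omega_i \bmod p_i$ has exact order $m$, $\omega_i^{r}-1$ is a unit in $\mathbb{Z}_{p_i^{e_i}}$ for every $0<r<m$, not only for divisors of $m$.
\item Then $(\omega_i^{r}-1)\sum_{s=0}^{m-1}\omega_i^{rs}=\omega_i^{rm}-1=0$ forces each sum to vanish.
\end{itemize}

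Second, Lemma~\ref{rd3} and Theorem~\ref{rd18} are stated only for odd primes, so handle even $n$ separately. There $m=\psi(n)=1$, and the count is trivially $1=\phi(1)^{k}$.

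You are right that only $m\mid p_i-1$ is used. However, the logic of the $t_i$ hypothesis runs the other way from what you wrote. The statement already assumes $m$ is the maximum index, so Theorem~\ref{rd1} gives $m=\gcd\{p_1-1,\dots,p_k-1\}$. From this, $\gcd(t_1,\dots,t_k)=1$ follows automatically, so the hypothesis is redundant rather than an input to your argument.
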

The next example shows the existence of a class non commutative halidon rings using quaternions. 
\begin{example}
The ring $\mathcal{H}_{\mathbb{Z}_{7}}$ is a halidon ring with maximum index $48$ with a primitive $48^{th}$ root of unity $q = 6+ 6i+6j+4k$ and $48=6$ is invertible in $\mathcal{H}_{\mathbb{Z}_{7}}$.
\begin{verbatim}
q^1 = 6+ 6i+6j+4k,  q^1-1 is invertible in H
q^2 = 4+ 2i+2j+6k,  q^2-1 is invertible in H
q^3 = 4+ 1i+1j+3k,  q^3-1 is invertible in H
q^4 = 0+ 2i+2j+6k,  q^4-1 is invertible in H
q^5 = 1+ 5i+5j+1k,  q^5-1 is invertible in H
q^6 = 5+ 1i+1j+3k,  q^6-1 is invertible in H
q^7 = 6+ 1i+1j+3k,  q^7-1 is invertible in H
q^8 = 5+ 0i+0j+0k,  q^8-1 is invertible in H
q^9 = 2+ 2i+2j+6k,  q^9-1 is invertible in H
q^10 = 6+ 3i+3j+2k, q^10-1 is invertible in H
q^11 = 6+ 5i+5j+1k, q^11-1 is invertible in H
q^12 = 0+ 3i+3j+2k, q^12-1 is invertible in H
q^13 = 5+ 4i+4j+5k, q^13-1 is invertible in H
q^14 = 4+ 5i+5j+1k, q^14-1 is invertible in H
q^15 = 2+ 5i+5j+1k, q^15-1 is invertible in H
q^16 = 4+ 0i+0j+0k, q^16-1 is invertible in H
q^17 = 3+ 3i+3j+2k, q^17-1 is invertible in H
q^18 = 2+ 1i+1j+3k, q^18-1 is invertible in H
q^19 = 2+ 4i+4j+5k, q^19-1 is invertible in H
q^20 = 0+ 1i+1j+3k, q^20-1 is invertible in H
q^21 = 4+ 6i+6j+4k, q^21-1 is invertible in H
q^22 = 6+ 4i+4j+5k, q^22-1 is invertible in H
q^23 = 3+ 4i+4j+5k, q^23-1 is invertible in H
q^24 = 6+ 0i+0j+0k, q^24-1 is invertible in H
q^25 = 1+ 1i+1j+3k, q^25-1 is invertible in H
q^26 = 3+ 5i+5j+1k, q^26-1 is invertible in H
q^27 = 3+ 6i+6j+4k, q^27-1 is invertible in H
q^28 = 0+ 5i+5j+1k, q^28-1 is invertible in H
q^29 = 6+ 2i+2j+6k, q^29-1 is invertible in H
q^30 = 2+ 6i+6j+4k, q^30-1 is invertible in H
q^31 = 1+ 6i+6j+4k, q^31-1 is invertible in H
q^32 = 2+ 0i+0j+0k, q^32-1 is invertible in H
q^33 = 5+ 5i+5j+1k, q^33-1 is invertible in H
q^34 = 1+ 4i+4j+5k, q^34-1 is invertible in H
q^35 = 1+ 2i+2j+6k, q^35-1 is invertible in H
q^36 = 0+ 4i+4j+5k, q^36-1 is invertible in H
q^37 = 2+ 3i+3j+2k, q^37-1 is invertible in H
q^38 = 3+ 2i+2j+6k, q^38-1 is invertible in H
q^39 = 5+ 2i+2j+6k, q^39-1 is invertible in H
q^40 = 3+ 0i+0j+0k, q^40-1 is invertible in H
q^41 = 4+ 4i+4j+5k, q^41-1 is invertible in H
q^42 = 5+ 6i+6j+4k, q^42-1 is invertible in H
q^43 = 5+ 3i+3j+2k, q^43-1 is invertible in H
q^44 = 0+ 6i+6j+4k, q^44-1 is invertible in H
q^45 = 3+ 1i+1j+3k, q^45-1 is invertible in H
q^46 = 1+ 3i+3j+2k, q^46-1 is invertible in H
q^47 = 4+ 3i+3j+2k, q^47-1 is invertible in H
q^48 = 1+ 0i+0j+0k, q is of order 48
q is a primitive 48 th root of unity
\end{verbatim}
$\mathcal{H}_{\mathbb{Z}_{n}}$ with $n$ even are not halidon rings. See programme $2$ in the appendix which has been developed using the theorem $\ref{rd2}.$
\end{example}
\section{Conclusions}
Halidon rings over quaternions have broadened the scope of mathematical research and created useful connections across several fields. \\
•	Cryptography, where algebraic structures can support the design and analysis of secure systems. \\
•	Group algebras, where quaternion-based ring structures offer new avenues for theoretical investigation. \\
•	Permutation polynomials over halidon rings, where these rings provide a broader setting for studying polynomial behaviour and applications. \\

\section{Appendix}
Programme $1$; Calculation of nil potent elements in $\mathcal{H}_{\mathbb{Z}_{n}}$. The calculation of $q^{s}$ is based on Theorem $\ref{t1}$. 
\begin{verbatim}
#include<iostream>
#include<cmath>
using namespace std;
int main() {
	cout << "To find the nil potent elements  q=a+bi+cj+dk." << endl;
	long long int a, b, c, d, n, i, j = 1, k, s, r1[25000], r2[25000],
 r3[25000], r4[25000], s1[25000], y = 0;
	cout << "Enter n=" << endl;
	cin >> n;
	for (a = 0; a < n; ++a) {
		for (b = 0; b < n; ++b) {
			for (c = 0; c < n; ++c) {
				for (d = 0; d < n; ++d) {
					r1[1] = a; r2[1] = b; r3[1] = c; r4[1] = d;
					for (s = 1; s <n; ++s) {
				//		cout << "q^" << s << " = " << r1[s] << " + " << r2[s] << "i" <<
                              " + " << r3[s] << "j" << " + " << r4[s] << "k" << endl;
						if (r1[s] == 0 && r2[s] == 0 && r3[s] == 0 && r4[s] == 0)
						{
							cout << "q=" << a << "+" << b << "i + " << c << "j +  " << d << "k " 
                           << "is of order " << s << endl << endl;; ++y; break;
						}
						r1[s + 1] = (a * r1[s] + (n - b) * r2[s] + (n - c) * r3[s] + (n - d) * r4[s]) % n;
						r2[s + 1] = (b * r1[s] + a * r2[s] + d * r3[s] + (n - c) * r4[s]) % n;
						r3[s + 1] = (c * r1[s] + (n - d) * r2[s] + a * r3[s] + b * r4[s]) % n;
						r4[s + 1] = (d * r1[s] + c * r2[s] + (n - b) * r3[s] + a * r4[s]) % n;
					}
				}
			}
		} 
		} cout << "number of nil potent elements=" << y; return 0;
	}
\end{verbatim}
Programme $2$  to check whether a ring of quaternions over $\mathbb{Z}_{n}$ a non trivial halidon ring or not. 
\begin{verbatim}
#include<iostream>
#include<cmath>
using namespace std;
int main() {
	cout << "To check for non trivial halidon rings." << endl;
	long long int w, x, y,z,c=0, n, i, j = 1, k,l, p, p1,p2, a, b,s,
		r1[20000], r2[20000], r3[20000], r4[20000], s1[20000], max,c1=0,c2=0;
	cout << "Enter n=" << endl;
	cin >> n;
	for (w = 0; w < n; ++w) {
		for (x = 0; x < n; ++x) {
			for (y = 0; y < n; ++y) {
				for (z = 0; z < n; ++z) {
					r1[1] = w; r2[1] = x; r3[1] = y; r4[1] = z;
					p = (w * w + x * x + y * y + z * z) % n;
					cout << "p= " << p << endl;
					p1 = (x * x+ y * y + z * z) % n;
					for (i = 1; i < n; ++i) {
						a = (p * i) % n;
						if (a == 1)
						{
								for (s = 1; s < pow(n, 4); ++s) {
								cout << "q^" << s << " = " << r1[s] << "+ " << r2[s] << "i" << "+" <<
                                     r3[s] << "j" << "+" << r4[s] << "k" << endl;
								p2 = ((r1[s]-1)*(r1[s]-1)+r2[s]*r2[s]+r3[s]*r3[s]+r4[s]*r4[s]) % n;
								for (l = 1; l < n; ++l) {
					b = (p2 * l) % n; if (b == 1) { cout << "q^" << s << "-1 is invertible in H" << endl; 
										c1++; }
									}
								if (r1[s] == 1 && r2[s] == 0 && r3[s] == 0 && r4[s] == 0)
								{
									cout << "q is of order " << s << endl; ++c; break;
								}
								r1[s + 1] = (w * r1[s] + (n - x) * r2[s] + (n - y )* r3[s] + (n - z) * r4[s]) % n;
								r2[s + 1] = (x * r1[s] + w * r2[s] + z * r3[s] + (n - y) * r4[s]) % n;
								r3[s + 1] = (y * r1[s] + (n - z) * r2[s] + w * r3[s] + x * r4[s]) % n;
								r4[s + 1] = (z * r1[s] + y * r2[s] + (n - x) * r3[s] + w * r4[s]) % n;
							} if (c1 == s - 1) { cout << "q is a primitive " << s << " th root of unity." << endl; 
                              ++c2; }s1[j] = s; j = j + 1; c1 = 0;
						}
					}
				}
			}
		}
	} max = s1[1]; for (k = 1; k < j; ++k) { if (s1[k] > max) { max = s1[k]; } }
	cout << "Maximum order= " << max << endl;
	cout << "number of elements with finite order= " << c << endl;
	cout << "No. of non-trivial primitive roots=" << c2-1;
	return 0;
}\end{verbatim}
Programme $3$ to find the unit group of the subring $\mathcal{S}_{\mathbf{Z}_{n}}$ of $\mathcal{H}_{\mathbf{Z}_{n}}$.
\begin{verbatim}
#include<iostream>
#include<cmath>
using namespace std;
int gcd(int c, int d) {
	if (c == 0)
		return d;
	return gcd(d % c, c);
}
int main() {
	cout << "To find the unit group of the subring S_Zn of H_Zn" << endl;
	long long int u=0,z=0,l=0, a=0,i,j=1,x,n, W=0,w=0,r[2000],s[300][300];
	cout << "Enter n=" << endl;
	cin >> n;
	r[1]=0;
	for (x = 2; x < n ; x++) { if (gcd(x,n) != 1) { j++; r[j] = x; } }
	cout << "zero divisors are " << endl;
	for (i = 1; i < j + 1; i++) { cout << " " << r[i]; }
	cout << endl;
	for (l = 1; l < j+1; l++) {
		W = w * w;
		for (a = 0; a < n; a++) {
			W = ((2*n - 3) * (a * a) + r[l])%n; s[l][a] = W;  
           cout << "s[" << l << "]" << "[" << a << "]= " << s[l][a] << endl; 
			for (w = 0; w < n; ++w) { u = (w * w)%n;  if (s[l][a] == u) 
            { cout << "w= " << w << endl; z++; continue; } }
		}
	}
	cout << endl << "No. of zero divisors= " << z << endl; if (z == 1) 
    { cout << "The subring is a field."; }
	cout << endl << "No. of units= " << n*n-z;
	return 0;
}\end{verbatim}
\end{document}